\newcommand{\N}{\mathbb{N}}
\newcommand{\Z}{\mathbb{Z}}
\begin{document}

\newtheorem{thm}{Theorem}[section] \crefformat{thm}{#2Theorem~#1#3}
\newtheorem{cor}[thm]{Corollary} \crefformat{cor}{#2Corollary~#1#3} 
\newtheorem{lem}[thm]{Lemma} \crefformat{lem}{#2Lemma~#1#3} 
\newtheorem{prop}[thm]{Proposition} \crefformat{prop}{#2Proposition~#1#3} 
\theoremstyle{definition} \newtheorem{defin}[thm]{Definition} \crefformat{defin}{#2Definition~#1#3} 
\theoremstyle{remark} \newtheorem{example}[thm]{Example} \crefformat{example}{#2Example~#1#3} 
\theoremstyle{remark} \newtheorem{rem}[thm]{Remark} \crefformat{rem}{#2Remark~#1#3} 
\newtheorem{prob}[thm]{Problem} \crefformat{prob}{#2Problem~#1#3} 
\crefformat{subsection}{#2subsection~#1#3}

\pagestyle{plain}

\title{\bf{\large Non-Solvable Graph of a Finite Group and Solvabilizers}}
\author{\normalsize Doron Hai-Reuven\makeatletter\footnote{Department of Mathematics, Bar-Ilan University, Israel. \newline Email: {\tt doron.hai@live.biu.ac.il}. \newline \indent This paper is part of the author's MSc thesis, which was written at Bar-Ilan University under the supervision of Prof. B.\ Kunyavskii.}\makeatother}
\date{}

\maketitle

\begin{abstract}
Let $G$ be a finite group. For $x \in G$, we define the solvabilizer of $x$ in $G$, denoted $sol_G(x)$, to be the set $\{ g \in G \mid \langle g,x \rangle$ is solvable$\}$. A group $G$ is an S-group if $sol_G(x)$ is a subgroup of $G$ for every $x \in G$. In this paper we prove that $G$ is solvable $\Leftrightarrow$ $G$ is an S-group. Secondly, we define the non-solvable graph of $G$ (denoted ${\mathcal S}_{G}$). Its vertices are $G$ and there is an edge between $x,y \in G$ whenever $\langle x,y \rangle$ is not solvable. If $S(G)$ is the solvable radical of $G$ and $G$ is not solvable, we look at the induced graph over $G \setminus S(G)$, denoted $\widehat{{\mathcal S}_{G}}$. We prove that if $G$ is not solvable, then $\widehat{{\mathcal S}_{G}}$ is irregular. In addition, we prove some properties of solvabilizers and non-solvable graphs.
\end{abstract}

\newpage

\section{Introduction} \label{Introduction}
\pagenumbering{arabic}
\setcounter{page}{2} 

One of the most interesting approaches in the study of groups is to associate a graph to each group. One can learn
about the properties of a group by exploring its graph. If $G$ is a finite group (throughout this paper we will assume $G$ is finite) and $R \subseteq G \times G$ is a relation over $G$, then we can associate a graph $(V, E)$ to $G$ as follows: Take $V = G$ as vertices and draw an edge between $x,y \in G$ if and only if $xRy$. Surprisingly, the information we get by looking only at relations between pairs of elements in the group, is sometimes sufficient for concluding that the whole group has a certain property. This field is relatively new, and over the years different types of graphs were defined, such as:

\noindent
\begin{enumerate}
\item {} Non-Commuting graph: For the relation $R = \{(x,y) \mid [x,y] \neq e_G \}$. Some results for this graph are described in~\cite{NCG}.

\item {} Non-Nilpotent graph: For the relation $R = \{(x,y) \mid \langle x,y \rangle$ is not nilpotent$\}$. Some research on this graph is made in~\cite{NNGG}.

\item {} Prime graph: This example is more famous and very different from the last two examples. As described in~\cite{AVV}, the vertices are the set of prime numbers which divide $|G|$, and there is an edge between two distinct vertices $p,q$ if there exists $x \in G$ with $O(x) = pq$. 
\end{enumerate} 

\pagebreak
\noindent
We want to define a graph which will help us to explore the property of being solvable. One of the most famous and deep results on solvable groups, Thompson's theorem, states that a group $G$
is solvable if and only if $\langle x,y \rangle$ is solvable for every $x,y \in G$. If $R = \{(x,y) \in G \times G \mid \langle x,y \rangle$ is not solvable$\}$ and the graph $(V_G, E_G)$ is associated to $G$ by the relation $R$, then an equivalent formulation for Thompson's theorem will be: $G$ is solvable if and only if $(V_G, E_G)$ is an empty graph. A new result, described in~\cite{CAG}, leads to a characterization of finite simple non-abelian groups by graphs. These results (and many others) are demonstrating the importance of the research of groups via graphs.
When doing such a research, it is important to learn as much as we can about the graph properties. Each property of the graph can teach us about a property of the group. In this paper we present the non-solvable graph of a group, along with special subsets of a group - solvabilizers. Given a group $G$, the non-solvable graph is defined by the relation $R$ above. If $x \in G$, then the solvabilizer of $x$ is $G \setminus Adj(x)$, where $Adj(x)$ is the set of the neighbours of $x$ in the non-solvable graph of $G$. The idea of defining such a graph and such subsets is natural, given the definitions in~\cite{NCG} (non-commuting graph and centralizers) and~\cite{NNGG} (non-nilpotent graph and nilpotentizers). 
\newline \newline
\noindent
In Section 2 we discuss solvabilizers. We present some basic properties of solvabilizers and prove that for every group $G$, the size of the centralizer of $x \in G$ divides the size of the solvabilizer of $x$ (Proposition 2.13). We define a new type of group, which we call an S-group. A group $G$ is an S-group if the solvabilizer of every $x \in G$ is a subgroup of $G$. We prove that $G$ is solvable if and only if $G$ is an S-group (Proposition 2.22). Equivalently, this proves that $G$ is solvable if and only if $G$ has the following property: For every $a,b,x \in G$, if $\langle a,x \rangle$ and $\langle b,x \rangle$ are solvable, then $\langle ab,x \rangle$ is solvable. This result is a new, equivalent condition for solvability. In addition, we note which of the properties we proved is relevant also for nilpotentizers.
\newline \newline
\noindent
In Section 3 we present the non-solvable graph of a group. We prove some interesting features of the graph, one of them is that if $S(G)$ is the solvable radical of $G$, then the induced graph over $G \setminus S(G)$ is irregular (Corollary 3.17). Dealing with the property of being irregular is important, as shown in~\cite{NNGG}. In addition, we note which of the properties we proved is relevant also for the non-nilpotent graph.
\newpage
\section{Solvabilizers}
\begin{defin}
Let $A,B \subseteq G$ be two non-empty subsets. The {\bf solvabilizer} of $B$ with respect to $A$, denoted by $Sol_{A}(B)$, is the subset 
\[
\{a \in A \mid \langle a,b \rangle \text{ is solvable } \forall b \in B \}
\]
For empty subsets, we define $Sol_{\emptyset}(B) := \emptyset$, $Sol_{A}(\emptyset) := A$. Finally, for brevity, we define $Sol_{A}(x) := Sol_{A}(\{x\})$, $Sol(G) := Sol_{G}(G)$.
\end{defin}
\noindent
The last definition is a natural extension to an earlier definition of a special type of subset of $G$, called ``nilpotentizer''.

\begin{defin} (~\cite{NNGG} )
Let $A,B \subseteq G$ be two non-empty subsets. The {\bf nilpotentizer} of $B$ with respect to $A$, denoted by $nil_{A}(B)$, is defined similarly to $sol_{A}(B)$ by replacing the ``solvable'' condition with ``nilpotent''.
\end{defin}

\begin{rem}
$Sol_{A}(B)$ need not be a subgroup of $G$ in general, even in the case $A,B \leq G$. Obviously,
$Sol_{A}(x) = Sol_{A}(\langle x \rangle)$ and by GAP~\cite{GAP} we get $|Sol_{A_{5}}((1,2,3))| = 24 \nmid 60 = |A_{5}|$. \newline So $Sol_{A_{5}}((1,2,3)) = Sol_{A_{5}}(\langle (1,2,3) \rangle) \nleq A_{5}$.
\end{rem}

\begin{thm} \label{thmTLC} (~\cite{TLC} )
$Sol(G) = S(G)$.
\end{thm}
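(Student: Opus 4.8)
\emph{Proof idea.}
I would prove the two inclusions separately. The inclusion $S(G)\subseteq Sol(G)$ is elementary: for $g\in S(G)$ and any $h\in G$, put $H=\langle g,h\rangle$. Then $H\cap S(G)$ is a subgroup of the solvable group $S(G)$, hence solvable, while $H/(H\cap S(G))\cong HS(G)/S(G)$ is generated by the images of $g$ and $h$ in $G/S(G)$; since $g$ maps to the identity, this quotient is cyclic, hence solvable. An extension of a solvable group by a solvable group is solvable, so $H$ is solvable, and as $h$ was arbitrary, $g\in Sol(G)$.

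For the reverse inclusion $Sol(G)\subseteq S(G)$ I would first reduce to the case of trivial solvable radical. Writing $\overline G=G/S(G)$, the same extension argument shows that the image of $Sol(G)$ in $\overline G$ is contained in $Sol(\overline G)$ (each element of $\overline G$ lifts to $G$, and the homomorphic image of a solvable group is solvable). Since $S(\overline G)=1$, it suffices to prove that $S(H)=1$ forces $Sol(H)=1$. If $S(H)=1$ then $F(H)=1$, so $F^{*}(H)=E(H)$; the centre of $E(H)$, being a nilpotent normal subgroup, lies in $F(H)=1$, so $N:=E(H)=\operatorname{soc}(H)$ is a direct product $T_{1}\times\cdots\times T_{k}$ of non-abelian simple groups, and $C_{H}(N)\leq F^{*}(H)=N$ forces $C_{H}(N)\leq Z(N)=1$. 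Thus $H$ embeds into $\operatorname{Aut}(N)$ and permutes the factors $T_{i}$, and the goal becomes: for each $1\neq g\in H$ find $h\in H$ with $\langle g,h\rangle$ non-solvable.

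Since $C_{H}(N)=1$, the element $g$ acts non-trivially on $N$, so at least one of the following occurs: (a) $g$ normalizes some factor $T=T_{i}$ and induces on it a non-trivial automorphism $\alpha$; or (b) $g$ moves some factor, and hence has an orbit $\{T_{1},\dots,T_{m}\}$ of length $m\geq 2$ on the set of factors. In case (b) I would choose $a,b\in T$ with $\langle a,b\rangle=T$ (every non-abelian finite simple group is $2$-generated) and take $h\in N$ supported on two factors of this orbit, with its two components chosen so that after transport along $g$ they become $a$ and $b$; then the conjugates $h,h^{g},h^{g^{2}},\dots$, which all lie in $N$, force $\langle g,h\rangle\cap N$ to project onto the simple group $T$ in one coordinate, so $\langle g,h\rangle$ is non-solvable. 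In case (a), for $y\in T$ the conjugation action of $\langle g,y\rangle$ on $T$ has image $\langle\alpha,\iota_{y}\rangle\leq\operatorname{Aut}(T)$ (with $\iota_{y}$ the inner automorphism induced by $y$) and kernel $\langle g,y\rangle\cap C_{H}(T)$, so $\langle g,y\rangle$ is non-solvable as soon as $\langle\alpha,\iota_{y}\rangle$ is; it then remains to find $y\in T$ making $\langle\alpha,\iota_{y}\rangle$ non-solvable inside the almost simple group $\langle\operatorname{Inn}(T),\alpha\rangle$.

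The hard part is precisely this last step, which is a generation statement for almost simple groups: for an almost simple $A$ with $A/\operatorname{soc}(A)$ cyclic and any non-trivial $\alpha\in A$, there should be $y\in\operatorname{soc}(A)$ with $\langle\alpha,y\rangle$ non-solvable. This, like the $2$-generation fact invoked in case (b), depends on the classification of finite simple groups and on the strong generation and spread theorems for simple and almost simple groups (in the spirit of Guralnick--Kantor and Breuer--Guralnick--Kantor), which I would use as a black box. Granting them, the reductions above are routine, and this is in essence the route by which the cited theorem is established.
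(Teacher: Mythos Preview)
The paper does not supply its own proof of this theorem; it is quoted verbatim as a result from \cite{TLC} (Guralnick--Kunyavski\u{\i}--Plotkin--Shalev) and then used as a black box throughout (for instance in the proof of Proposition~2.22). So there is no in-paper argument to compare against.

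That said, your outline is a faithful reconstruction of the strategy actually used in \cite{TLC}: the inclusion $S(G)\subseteq Sol(G)$ is elementary exactly as you write it; the passage to $G/S(G)$ reduces to the case of trivial solvable radical; the generalized Fitting subgroup then forces the socle to be a direct product of non-abelian simple groups with trivial centralizer; and the remaining work is a case split according to whether the given element moves the simple factors or normalizes one. You correctly identify the genuinely hard step as the almost-simple generation statement at the end, and \cite{TLC} does indeed establish this via the classification of finite simple groups together with the Guralnick--Kantor spread/generation results, precisely the tools you name.

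One small caution on your case (b): the element $g$ may simultaneously permute the simple factors \emph{and} twist each one by an outer automorphism, so ``transport along $g$'' has to be set up with this twisting built in; otherwise the projection of $\langle h,h^{g},h^{g^{2}},\dots\rangle$ onto a single coordinate need not land where you want it. This is routine to fix (and is handled in \cite{TLC}), but in a fuller write-up it would deserve a sentence. Apart from that, your sketch matches the published proof in architecture and in its dependence on CFSG.
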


\begin{rem}
Note that $e_{G} \in A \subseteq G \Rightarrow e_{G} \in Sol_{A}(B)$ for every set $B$. Also, 
$x \in Sol_{A}(B)$, $x^{k} \in A$ for some $k \in \Z \Rightarrow x^{k} \in Sol_{A}(B)$. So if we look at a solvabilizer in the form $sol_G(x)$ for some $x \in G$, then $sol_G(x) \leq G$ if and only if $sol_G(x)$ is closed under multiplication in $G$. Another interesting fact, easily followed by Theorem 2.4, is that $G$ is solvable $\Leftrightarrow Sol(G) = G$.
\end{rem}
\noindent
We will use the notation $Sol(G)$ instead of $S(G)$, in order to keep in mind the original definition of $Sol(G)$.
At first, we will present some basic properties of solvabilizers.

\begin{lem}
Let $G$ be a group, $H \leq G$ a solvable subgroup and $s \in Sol(G)$. Then $\langle H, \{s\} \rangle$ is solvable.
\end{lem}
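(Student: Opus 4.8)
The plan is to push everything onto \cref{thmTLC}, which identifies $Sol(G)$ with the solvable radical $S(G)$. Thus the hypothesis $s \in Sol(G)$ really says $s \in S(G)$, and $S(G)$ is a \emph{normal, solvable} subgroup of $G$. Once this is unpacked, the lemma becomes a routine statement about solvable groups.

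First I would observe that, since $S(G) \trianglelefteq G$, the product set $HS(G) = \{\,hn : h \in H,\ n \in S(G)\,\}$ is a subgroup of $G$. It contains $H$ and it contains $s \in S(G)$, so $\langle H, \{s\} \rangle \leq HS(G)$. A subgroup of a solvable group is solvable, so it suffices to prove that $HS(G)$ is solvable.

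Next I would verify that $HS(G)$ is solvable by exhibiting it as an extension of a solvable group by a solvable group. Indeed, $S(G)$ is normal in $HS(G)$ and is solvable by definition of the solvable radical; and by the second isomorphism theorem the quotient $HS(G)/S(G)$ is isomorphic to $H/(H \cap S(G))$, a homomorphic image of the solvable group $H$, hence solvable. Since an extension of a solvable group by a solvable group is solvable, $HS(G)$ is solvable, and therefore so is $\langle H, \{s\} \rangle$.

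I do not anticipate a genuine obstacle here; the only real decision is to route the proof through \cref{thmTLC} rather than trying to argue directly from the defining property of $Sol(G)$ — namely that $\langle s, g \rangle$ is solvable for every $g \in G$ — since the latter approach would essentially require reproving that theorem. An alternative would be an inductive argument on $|H|$, peeling off one generator of $H$ at a time and invoking solvability of $\langle h, s \rangle$, but the normal-closure argument above is cleaner and entirely self-contained given the results already available.
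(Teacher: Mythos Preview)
Your proof is correct and is essentially identical to the paper's: both form $K = H\,Sol(G)$, use normality of $Sol(G)$ and the second isomorphism theorem to see $K/Sol(G) \cong H/(H\cap Sol(G))$ is solvable, conclude $K$ is solvable, and then note $\langle H,\{s\}\rangle \leq K$. The only cosmetic difference is that you explicitly invoke \cref{thmTLC} to justify that $Sol(G)$ is normal and solvable, whereas the paper uses these facts tacitly (having already adopted the convention that $Sol(G)$ denotes $S(G)$).
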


\begin{proof}
Define $K = HSol(G)$. $Sol(G) \vartriangleleft G$ and therefore $K \leq G$. \newline $Sol(G) \vartriangleleft K$ and by the second isomorphism theorem we get $K/Sol(G) \cong H/H \cap Sol(G)$. $Sol(G)$ is solvable and $K/Sol(G)$ is 
solvable (isomorphic to a quotient group of the solvable group $H$). Therefore $K$ is solvable and 
$\langle H, \{s\} \rangle \leq K$ is solvable as well.
\end{proof}

\begin{lem}
Let $G$ be a group. Then $Sol(G)Sol_G(x) = Sol_G(x)$ $\forall x \in G$.
\end{lem}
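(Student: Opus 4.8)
The plan is to prove the equality by establishing the two inclusions $Sol_G(x) \subseteq Sol(G)Sol_G(x)$ and $Sol(G)Sol_G(x) \subseteq Sol_G(x)$. The first inclusion is immediate: by \cref{thmTLC}, $Sol(G) = S(G)$ is a subgroup of $G$, so $e_G \in Sol(G)$, and any $g \in Sol_G(x)$ may be written as $g = e_G \cdot g \in Sol(G)Sol_G(x)$. So the content of the statement lies entirely in the reverse inclusion.

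For the reverse inclusion I would take an arbitrary element of $Sol(G)Sol_G(x)$, say $sg$ with $s \in Sol(G)$ and $g \in Sol_G(x)$, and show $sg \in Sol_G(x)$, i.e.\ that $\langle sg, x \rangle$ is solvable. The key move is to pass to the larger subgroup $H := \langle g, x \rangle$, which is solvable precisely because $g \in Sol_G(x)$. Since $s \in Sol(G)$, Lemma~2.6 applies to the solvable subgroup $H$ and the element $s$, yielding that $\langle H, \{s\} \rangle = \langle g, x, s \rangle$ is solvable. Now $sg$ and $x$ both lie in $\langle g, x, s \rangle$, so $\langle sg, x \rangle \leq \langle g, x, s \rangle$, and a subgroup of a solvable group is solvable; hence $\langle sg, x \rangle$ is solvable and $sg \in Sol_G(x)$. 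Combining the two inclusions gives $Sol(G)Sol_G(x) = Sol_G(x)$.

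I do not expect any genuine obstacle: the entire argument is a direct application of Lemma~2.6, once one thinks to enlarge $\langle sg, x \rangle$ to $\langle g, x, s \rangle$ in order to feed a \emph{solvable} subgroup into that lemma. The only points requiring a little care are bookkeeping ones — that this is an equality of subsets rather than of subgroups, so both inclusions must be addressed, and that the ``$\supseteq$'' direction uses nothing more than $e_G \in Sol(G)$.
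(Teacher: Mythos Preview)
Your proof is correct and follows essentially the same approach as the paper: both directions are handled as you describe, and the substantive inclusion $Sol(G)Sol_G(x) \subseteq Sol_G(x)$ is proved by applying Lemma~2.6 to the solvable subgroup $\langle g, x \rangle$ and the element $s \in Sol(G)$, then observing that $\langle sg, x \rangle \leq \langle g, x, s \rangle$. The only cosmetic difference is that the paper dismisses the inclusion $Sol_G(x) \subseteq Sol(G)Sol_G(x)$ with a single word, whereas you spell out that $e_G \in Sol(G)$.
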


\begin{proof}
Obviously, $Sol(G)Sol_G(x) \supseteq Sol_G(x)$. \newline Let $s \in Sol(G)$ and $a \in Sol_G(x)$. $\langle a, x \rangle$ is solvable and by Lemma 2.6 $\langle a, x, s \rangle$ is solvable. $\langle sa, x \rangle \leq \langle a, x, s \rangle$, so  $\langle sa, x \rangle$ is solvable. Therefore $sa \in Sol_G(x)$ and $Sol(G)Sol_G(x) \subseteq Sol_G(x)$.
\end{proof}

\begin{lem}
Let $G$ be a group. Then $|Sol(G)| \mid |Sol_G(x)|$ $\forall x \in G$.
\end{lem}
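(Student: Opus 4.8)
The plan is to use Lemma 2.7, which tells us that $Sol(G)Sol_G(x) = Sol_G(x)$ for every $x \in G$. Since $Sol(G)$ is a subgroup of $G$ (it equals $S(G)$ by Theorem~\ref{thmTLC}), the set $Sol_G(x)$ is a union of cosets of $Sol(G)$: indeed, by Lemma 2.7, for every $a \in Sol_G(x)$ the whole coset $Sol(G)a$ is contained in $Sol_G(x)$. Distinct left cosets of a subgroup are disjoint, so $Sol_G(x)$ is partitioned into left cosets of $Sol(G)$, each of size $|Sol(G)|$. Hence $|Sol(G)| \mid |Sol_G(x)|$.

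I would write this out as follows. Fix $x \in G$ and set $N = Sol(G)$; by Theorem~\ref{thmTLC}, $N = S(G) \leq G$. By Lemma 2.7, $N \cdot Sol_G(x) = Sol_G(x)$, so for every $a \in Sol_G(x)$ we have $Na \subseteq Sol_G(x)$. Therefore $Sol_G(x) = \bigcup_{a \in Sol_G(x)} Na$ is a union of left cosets of $N$ in $G$. Choosing coset representatives $a_1,\dots,a_k$ so that this union is disjoint, we get $|Sol_G(x)| = k\,|N|$, which gives $|Sol(G)| \mid |Sol_G(x)|$.

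There is essentially no obstacle here: the only point requiring a little care is to note that $Sol(G)$ is genuinely a subgroup (so that "union of cosets" makes sense and cosets are either equal or disjoint), which is immediate from Theorem~\ref{thmTLC}. One should also dispatch the trivial edge case where $x$ is such that $Sol_G(x)$ is empty or where $G$ is solvable (so $Sol_G(x) = G$), but in the former case $e_G \in Sol_G(x)$ always (by the Remark after Theorem~\ref{thmTLC}), so $Sol_G(x)$ is never empty, and in the latter case the divisibility is trivial. So the argument is a one-line consequence of Lemma 2.7 together with the fact that $Sol(G) \leq G$.
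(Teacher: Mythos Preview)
Your proof is correct and is essentially the same as the paper's: both invoke Lemma~2.7 to see that $Sol(G)$ acts on $Sol_G(x)$ by left multiplication, and then conclude divisibility because $Sol_G(x)$ decomposes into pieces of size $|Sol(G)|$. The paper phrases this as a free action of $H=Sol(G)$ on $A=Sol_G(x)$ (trivial stabilizers, hence orbits of size $|H|$), while you phrase it as a partition of $Sol_G(x)$ into cosets of $Sol(G)$; these are the same argument, since the orbits of left multiplication are precisely the cosets $Sol(G)a$.
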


\begin{proof}
By Lemma 2.7, if $s \in Sol(G)$ and $a \in Sol_G(x)$, $H = Sol(G)$ acts on the set $A = Sol_G(x)$ by $s*a := sa$.
$H_{a} = \{h \in H \mid ha = a \} = \{e_{H}\}$. $|H_{a}| = 1$ $\forall a \in A \Rightarrow |H| \mid |A|$.
\end{proof}

\begin{lem}
Let $A,B,C \subseteq G$ be three subsets. Then we have:
\begin{enumerate}
\item $A \subseteq B \Rightarrow Sol_{A}(C) \subseteq Sol_{B}(C)$, $Sol_{C}(B) \subseteq Sol_{C}(A)$.
\item $Sol_{A}(Sol_{B}(A)) = A$.
\item $A \subseteq B \Rightarrow Sol_{A}(C) = A \cap Sol_{B}(C)$.
\item $Sol_{C}(A \cup B) = Sol_{C}(A) \cap Sol_{C}(B)$, $Sol_{C}(A \cap B) \supseteq Sol_{C}(A) \cup Sol_{C}(B)$.
\item $Sol_{A}(B) = \bigcap_{x \in B}Sol_{A}(x)$. Particularly, $Sol(G) = \bigcap_{x \in G}Sol_{G}(x)$.
\end{enumerate}
\end{lem}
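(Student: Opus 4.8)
The plan is to derive all five items directly from the definition of $Sol_{A}(B)$, treating each as a short exercise in unwinding quantifiers. The single observation used everywhere is that $a \in Sol_{A}(B)$ is the conjunction of ``$a \in A$'' and ``$\langle a,b\rangle$ is solvable for every $b \in B$'', and that the second clause is monotone in a one-sided way: enlarging $B$ makes it stricter, while the ambient set $A$ has no effect on it. This asymmetry is exactly what produces the two opposite-direction inclusions in item~1.

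I would handle the items in order. For item~1, if $A \subseteq B$ and $a \in Sol_{A}(C)$, then $a \in B$ and the solvability condition against all of $C$ already holds, so $a \in Sol_{B}(C)$; dually, $c \in Sol_{C}(B)$ means $c$ is solvable against every element of $B$, hence against every element of the smaller set $A$, so $c \in Sol_{C}(A)$. Item~3 is a refinement: for $A \subseteq B$, the set $A \cap Sol_{B}(C)$ consists of the $a \in A$ that also lie in $B$ (automatic) and are solvable against $C$, which is precisely $Sol_{A}(C)$. For item~4 the equality comes from rewriting ``$\forall x \in A \cup B$'' as ``$(\forall x \in A)$ and $(\forall x \in B)$'', and the superset relation is item~1 applied to $A \cap B \subseteq A$ and $A \cap B \subseteq B$. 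Item~5 is the same splitting over an arbitrary family: $a \in Sol_{A}(B)$ iff $a \in A$ and $a \in Sol_{A}(x)$ for all $x \in B$, i.e. $a \in \bigcap_{x \in B} Sol_{A}(x)$; taking $A = B = G$ and recalling $Sol(G) := Sol_{G}(G)$ gives the ``particularly'' clause. Item~2 is the only one with a little substance: writing $D = Sol_{B}(A)$, the inclusion $Sol_{A}(D) \subseteq A$ is by definition, and conversely, for any $a \in A$ the very definition of $D = Sol_{B}(A)$ forces $\langle d,a\rangle$ to be solvable for every $d \in D$ (precisely because $a \in A$), so $a \in Sol_{A}(D)$ and hence $Sol_{A}(D) = A$.

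The main obstacle --- really the only thing requiring care --- is hygiene with the degenerate conventions $Sol_{\emptyset}(B) = \emptyset$ and $Sol_{A}(\emptyset) = A$: each identity should be checked separately, or noted to hold vacuously, when one of $A,B,C$ is empty. For instance $Sol_{C}(\emptyset) = C$ turns the first equality of item~4 into $Sol_{C}(B) = C \cap Sol_{C}(B)$, and in item~5 the empty intersection $\bigcap_{x \in \emptyset} Sol_{A}(x)$ must be interpreted as the ambient set $A$ to match $Sol_{A}(\emptyset) = A$. These cases are routine, but they should be mentioned.
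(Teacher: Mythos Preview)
Your proposal is correct and matches the paper's approach exactly: the paper's proof is the single sentence ``It is straightforward,'' and your write-up is precisely the routine unwinding of the definition that this phrase invites, with the added (and welcome) care about the empty-set conventions. There is nothing to add or correct.
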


\begin{proof} 
It is straightforward.
\end{proof}

\begin{lem}
If $Sol_{G}(x) \leq G$ $\forall x \in G$ then $Sol_{H}(A) \leq G$ $\forall A \subseteq G, H \leq G$.
\end{lem}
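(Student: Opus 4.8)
The plan is to reduce the general statement to the single-element case $Sol_G(x)$, which is a subgroup of $G$ by hypothesis, using only the bookkeeping identities already recorded in Lemma 2.10. First I would handle the degenerate case: if $A = \emptyset$ then by definition $Sol_H(\emptyset) = H \leq G$, and we are done; so assume $A \neq \emptyset$ from now on. (There is no issue from the other side, since a subgroup $H \leq G$ is never empty.)

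For $A \neq \emptyset$, I would first apply Lemma 2.10(5) to write $Sol_H(A) = \bigcap_{x \in A} Sol_H(x)$. Since an intersection of subgroups of $G$ is again a subgroup of $G$, it suffices to prove that $Sol_H(x) \leq G$ for each individual $x \in A$. Fix such an $x$; because $A \subseteq G$ we have $x \in G$, so the hypothesis is applicable to it.

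Next, since $H \subseteq G$, Lemma 2.10(3) gives $Sol_H(x) = H \cap Sol_G(x)$. By the hypothesis of the lemma, $Sol_G(x) \leq G$, and of course $H \leq G$, so $Sol_H(x)$ is the intersection of two subgroups of $G$ and hence a subgroup of $G$. Feeding this back into the previous step yields $Sol_H(A) = \bigcap_{x \in A}\bigl(H \cap Sol_G(x)\bigr) \leq G$, as required.

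I do not expect a genuine obstacle here; the argument is essentially a two-line reduction via Lemma 2.10. The only point deserving care is to make sure that every index $x$ occurring in the intersection actually lies in $G$ — so that the standing hypothesis ``$Sol_G(x) \leq G$ for all $x \in G$'' really applies — and this is automatic from $A \subseteq G$. Everything else is routine manipulation of the definitions.
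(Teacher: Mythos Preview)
Your argument is correct and is essentially the paper's own proof: both rewrite $Sol_H(A)$ as $H \cap \bigcap_{x\in A} Sol_G(x)$ via the bookkeeping identities and conclude by noting this is an intersection of subgroups. The only cosmetic differences are that you apply the identities in the opposite order and treat the $A=\emptyset$ case separately; note also that the identities you cite as ``Lemma~2.10(3),(5)'' are Lemma~2.9(3),(5) in the paper's numbering.
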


\begin{proof}
\[
Sol_{H}(A) = H \cap Sol_{G}(A) = H \cap \bigcap_{x \in A} Sol_{G}(x)
\]
$Sol_{H}(A)$ is an intersection of subgroups
and therefore a subgroup.
\end{proof}
\noindent
By translating~\cite{NNGG} into solvabilizers terms we gain more properties, as described in the next two lemmas.

\begin{lem}
Let $G$ be a group, $N \vartriangleleft G$, $N \subseteq Sol(G)$ and $x,y,g \in G$. Then we have:
\begin{enumerate}
\item $\langle x \rangle = \langle y \rangle \Rightarrow Sol_{G}(x)=Sol_{G}(y)$.
\item $Sol_{G}(gxg^{-1}) = gSol_{G}(x)g^{-1}$.
\item $Sol_{G/N}(xN) = Sol_G(x)/N$.
\end{enumerate}
\end{lem}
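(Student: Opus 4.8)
The plan is to treat the three parts in turn. Parts (1) and (2) are formal and do not use the hypotheses on $N$; the assumption $N \subseteq Sol(G)$ enters only in part (3), which is the substantive one. Throughout, the guiding principle is that the solvability of $\langle a,x\rangle$ is unchanged under operations that do not change this subgroup (replacing $x$ by a generator of $\langle x\rangle$, conjugating), and is well-behaved under quotients and extensions.

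For part (1): if $\langle x\rangle=\langle y\rangle$, then $x\in\langle y\rangle$ and $y\in\langle x\rangle$, so for every $a\in G$ each of $\langle a,x\rangle$ and $\langle a,y\rangle$ contains the generators of the other, whence $\langle a,x\rangle=\langle a,y\rangle$; in particular $\langle a,x\rangle$ is solvable iff $\langle a,y\rangle$ is, so $Sol_G(x)=Sol_G(y)$ (this is also immediate from $Sol_G(x)=Sol_G(\langle x\rangle)$, noted in Remark 2.3). For part (2): conjugation by $g$ is an automorphism of $G$ carrying $\langle a,x\rangle$ onto $g\langle a,x\rangle g^{-1}=\langle gag^{-1},gxg^{-1}\rangle$, so the former is solvable iff the latter is; applying this with $a$ replaced by $g^{-1}ag$ gives $a\in Sol_G(gxg^{-1})\iff g^{-1}ag\in Sol_G(x)\iff a\in gSol_G(x)g^{-1}$. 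For part (3) I would first check that $Sol_G(x)/N$ even makes sense as a subset of $G/N$: since $N\subseteq Sol(G)=\bigcap_{z\in G}Sol_G(z)$ (Lemma 2.9(5)) we have $N\subseteq Sol_G(x)$, and since $Sol(G)Sol_G(x)=Sol_G(x)$ (Lemma 2.7) the set $Sol_G(x)$ is a union of cosets of $N$; thus for the quotient map $\pi\colon G\to G/N$ one has $\pi^{-1}(\pi(Sol_G(x)))=Sol_G(x)$, and we set $Sol_G(x)/N:=\pi(Sol_G(x))$. Then $Sol_G(x)/N\subseteq Sol_{G/N}(xN)$ is immediate: if $\langle a,x\rangle$ is solvable, so is its image $\langle aN,xN\rangle=\pi(\langle a,x\rangle)$. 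For the reverse inclusion, take $bN\in Sol_{G/N}(xN)$, so that $\langle b,x\rangle N/N=\langle bN,xN\rangle$ is solvable; put $H:=\langle b,x\rangle N\leq G$ (a subgroup because $N\vartriangleleft G$). Then $H/N$ is solvable, and $N$ is solvable (being contained in the solvable group $Sol(G)$), so $H$ is solvable as an extension of a solvable group by a solvable group; hence its subgroup $\langle b,x\rangle$ is solvable, i.e. $b\in Sol_G(x)$ and $bN\in Sol_G(x)/N$.

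The only point requiring care is in part (3): because $Sol_G(x)$ need not be a subgroup, one must explicitly verify that it is a union of $N$-cosets before the symbol $Sol_G(x)/N$ is meaningful, and the reverse inclusion relies on closure of solvability under extensions (applied to $1\to N\to H\to H/N\to 1$). Parts (1) and (2) are pure bookkeeping. Since this lemma is the translation into solvabilizer language of the corresponding statements for nilpotentizers in \cite{NNGG}, I expect the write-up to mirror the arguments given there.
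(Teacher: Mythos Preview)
Your proof is correct and matches the paper's approach for all three parts. Parts (1) and (2) are identical to the paper's arguments. For part (3), the paper phrases the key step directly in terms of derived series (if $\langle g,x\rangle^{(k)}\subseteq N$ and $N^{(l)}=\{e_G\}$ then $\langle g,x\rangle^{(k+l)}=\{e_G\}$), which is exactly your ``extension of solvable by solvable is solvable'' unpacked; your version additionally makes explicit why $Sol_G(x)/N$ is well-defined as a union of $N$-cosets, a point the paper leaves implicit.
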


\begin{proof} 

\hspace*{\fill}

\begin{enumerate}
\item $\langle x \rangle = \langle y \rangle \Rightarrow x \in \langle y \rangle$, 
$y \in \langle x \rangle \Rightarrow \langle a,x \rangle \subseteq \langle a,y \rangle$, $\langle a,y \rangle \subseteq \langle a,x \rangle$ for all $a \in G \Rightarrow \langle a,x \rangle = \langle a,y \rangle$ for all $a \in G \Rightarrow Sol_{G}(x)=Sol_{G}(y)$.

\item $Sol_{G}(gxg^{-1}) = \newline \{y \in G \mid \langle y,gxg^{-1} \rangle$ is solvable$\} = \newline
\{gyg^{-1} \in G \mid \langle gyg^{-1},gxg^{-1} \rangle$ is solvable$\} = \newline
\{gyg^{-1} \in G \mid g \langle y,x \rangle g^{-1}$ is solvable $\}$. \newline
$g \langle y,x \rangle g^{-1}$ is solvable $\Leftrightarrow \langle y,x \rangle$ is solvable, so \newline
$Sol_{G}(gxg^{-1}) = \{gyg^{-1} \in G \mid \langle y,x \rangle$ is solvable$\} = gSol_{G}(x)g^{-1}$. \newline Note that this fact implies that $|Sol_G(x)|$ is constant on conjugacy classes.

\item $N$ is solvable and therefore $\exists l \in \N$ such that $N^{(l)} = \{e_G\}$. 
\newline
$Sol_{G/N}(xN) = \newline \{gN \mid \langle gN,xN \rangle$ is solvable$\} = \{ gN \mid \exists k \in \N$ $\langle g,x \rangle^{(k)} \subseteq N \}  = \newline \{ gN \mid \exists k \in \N$ $\langle g,x \rangle^{(k+l)} \subseteq N^{(l)} = \{e_G\} \} = \{gN \mid \langle g,x \rangle$ is solvable$\} = Sol_G(x)/N$.

\end{enumerate}
\end{proof}

\begin{lem}
Let $G$ be a group. Then $O(x) \mid |Sol_{G}(x)|$ $\forall x \in G$.
\end{lem}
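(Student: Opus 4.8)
The plan is to let the finite cyclic group $\langle x \rangle$ act on the set $Sol_G(x)$ by left multiplication and to observe that this action is free, so that $|\langle x \rangle| = O(x)$ divides $|Sol_G(x)|$. This is the same device used in the proof of Lemma 2.8, only with $Sol(G)$ replaced by $\langle x \rangle$.

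First I would check that left multiplication by elements of $\langle x \rangle$ really does preserve $Sol_G(x)$, which is the one point requiring a (short) argument. If $g \in \langle x \rangle$ and $a \in Sol_G(x)$, then $g$ is a power of $x$, so $g \in \langle x \rangle \subseteq \langle a, x \rangle$, and hence $\langle ga, x \rangle \subseteq \langle a, x \rangle$. Since $a \in Sol_G(x)$, the group $\langle a, x \rangle$ is solvable, so its subgroup $\langle ga, x \rangle$ is solvable too, and therefore $ga \in Sol_G(x)$. (Taking $a = e_G$, which lies in $Sol_G(x)$ because $\langle x \rangle$ is cyclic hence solvable, this also records the harmless fact that $\langle x \rangle \subseteq Sol_G(x)$, so $A := Sol_G(x)$ is a non-empty set on which $\langle x \rangle$ acts.)

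Next, for any $a \in A$ the stabilizer $\{g \in \langle x \rangle \mid ga = a\}$ is trivial by cancellation in $G$, so the action is free and every orbit has size $|\langle x \rangle| = O(x)$. Since the orbits partition $A$, we conclude $O(x) \mid |A| = |Sol_G(x)|$.

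I expect no genuine obstacle. The only subtlety worth flagging is that $Sol_G(x)$ need not be a subgroup of $G$ (Remark 2.3), so one cannot invoke Lagrange directly; the free-action argument is precisely what sidesteps this, exactly as in Lemma 2.8. (Alternatively, one could note via Lemma 2.6 that $\langle x \rangle Sol(G)$ is an actual subgroup of $G$ contained in $Sol_G(x)$ and combine Lagrange with the counting of Lemma 2.8, but the direct action is the shortest route.)
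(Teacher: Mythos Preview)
Your argument is correct and is essentially the same as the paper's: both show that $Sol_G(x)$ is a union of left cosets of $\langle x\rangle$ and conclude by counting. The only cosmetic difference is that the paper first rewrites $Sol_G(x)$ as the union of all solvable subgroups of $G$ containing $x$ and observes each such subgroup decomposes into $\langle x\rangle$-cosets, whereas you verify closure of $Sol_G(x)$ under left multiplication by $\langle x\rangle$ directly (exactly paralleling Lemma~2.8).
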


\begin{proof}
Let $x \in G$ and $A = \{H \leq G \mid x \in H$ and $H$ is solvable$\}$. $\langle x \rangle \in A$, so $A$ is not empty. 
Let $Y = \bigcup A$. If $y \in Sol_{G}(x)$ then $\langle y,x \rangle$ is solvable and therefore 
$\langle y,x \rangle \in A \Rightarrow y \in Y$. If $y \in Y$ then $y \in H$ for some solvable subgroup $H$ with
$x \in H$. $\langle x,y \rangle \leq H \Rightarrow \langle x,y \rangle$ is solvable $\Rightarrow y \in Sol_{G}(x)$.
So $Sol_{G}(x) = Y$, which is the union of all solvable subgroups containing $x$. Each of those groups is a disjoint
union of some cosets of $\langle x \rangle$. Therefore, $Sol_{G}(x)$ is a disjoint union of some cosets of
$\langle x \rangle \Rightarrow O(x) \mid |Sol_{G}(x)|$.
\end{proof}
\noindent
Now we will prove a stronger property of solvabilizers.

\begin{prop}
Let $G$ be a group. Then $|C_G(x)| \mid |Sol_{G}(x)|$ $\forall x \in G$.
\end{prop}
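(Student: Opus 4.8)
The plan is to argue by induction on $|G|$. The inclusion $C_G(x)\subseteq Sol_G(x)$ holds for every $G$, since $\langle c,x\rangle$ is abelian (hence solvable) whenever $c\in C_G(x)$; so if $G$ is solvable or $x=e_G$, then $Sol_G(x)=G$ and the divisibility is Lagrange's theorem. I therefore assume $G$ is non-solvable, $x\ne e_G$, and that the statement holds for all groups of strictly smaller order. It is worth noting first that $Sol_G(x)$ need not be a union of cosets of $C_G(x)$ (already for $x=(1\,2)\in S_5$ there exist $c\in C_G(x)$, $a\in Sol_G(x)$ with $ca\notin Sol_G(x)$), so the divisibility will not come from a subgroup lying inside $Sol_G(x)$ but from a counting argument.

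If the solvable radical $N:=Sol(G)$ is nontrivial, I would quotient it out. With $\bar G:=G/N$ and $\bar x:=xN$, Lemma~2.11(3) gives $Sol_{\bar G}(\bar x)=Sol_G(x)/N$, so $|Sol_G(x)|=|N|\cdot|Sol_{\bar G}(\bar x)|$. The projection $G\to\bar G$ carries $C_G(x)$ into $C_{\bar G}(\bar x)$ with kernel $C_G(x)\cap N=C_N(x)$, hence $|C_G(x)|=|C_N(x)|\cdot|\overline{C_G(x)}|$, where $|C_N(x)|$ divides $|N|$ and $|\overline{C_G(x)}|$ divides $|C_{\bar G}(\bar x)|$, which divides $|Sol_{\bar G}(\bar x)|$ by the inductive hypothesis ($|\bar G|<|G|$). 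Multiplying these two divisibilities yields $|C_G(x)|\mid|N|\cdot|Sol_{\bar G}(\bar x)|=|Sol_G(x)|$. This leaves the case $Sol(G)=\{e_G\}$, in which $Z(G)=\{e_G\}$ as well, since $Z(G)$ is a solvable normal subgroup.

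For $Sol(G)=\{e_G\}$ I would apply Burnside's lemma (the Cauchy--Frobenius counting formula) to the conjugation action of $C_G(x)$ on $Sol_G(x)$; this action is well defined because every $c\in C_G(x)$ fixes $x$, so $c\,Sol_G(x)\,c^{-1}=Sol_G(cxc^{-1})=Sol_G(x)$ by Lemma~2.11(2). Put $\mathrm{Fix}(c)=\{a\in Sol_G(x):cac^{-1}=a\}=Sol_G(x)\cap C_G(c)$ and let $r$ be the number of orbits, so that $|C_G(x)|\,r=|Sol_G(x)|+\sum_{e_G\ne c\in C_G(x)}|\mathrm{Fix}(c)|$. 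For $c\ne e_G$ one has $c\notin Z(G)$, so $C_G(c)$ is a proper subgroup; moreover $x\in C_G(c)$ and $\mathrm{Fix}(c)=\{a\in C_G(c):\langle a,x\rangle\text{ is solvable}\}=Sol_{C_G(c)}(x)$, whose order is divisible by $|C_{C_G(c)}(x)|=|C_G(x)\cap C_G(c)|$ by the inductive hypothesis. Also $|\mathrm{Fix}(c)|$ depends only on the $C_G(x)$-conjugacy class of $c$: if $c'=dcd^{-1}$ with $d\in C_G(x)$, then $dC_G(c)d^{-1}=C_G(c')$ while $d$ fixes $x$ and hence preserves $Sol_G(x)$ (Lemma~2.11(2)). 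Grouping $\sum_{c\ne e_G}|\mathrm{Fix}(c)|$ over these classes, the class of $c$ contributes $[C_G(x):C_G(x)\cap C_G(c)]\cdot|Sol_{C_G(c)}(x)|=|C_G(x)|\cdot\dfrac{|Sol_{C_G(c)}(x)|}{|C_G(x)\cap C_G(c)|}$, an integer multiple of $|C_G(x)|$. Hence $\sum_{c\ne e_G}|\mathrm{Fix}(c)|$ is divisible by $|C_G(x)|$, and therefore so is $|Sol_G(x)|=|C_G(x)|\,r-\sum_{c\ne e_G}|\mathrm{Fix}(c)|$.

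The step I expect to be the real obstacle is choosing the right mechanism: multiplication by $C_G(x)$ (on either side, or two-sided) does not preserve $Sol_G(x)$, so one is forced to the conjugation action, and then it is the combination of Burnside's lemma with the observation that the fixed-point counts $|\mathrm{Fix}(c)|$ are class functions of $c$ that turns the orbit count into the divisibility $|C_G(x)|\mid|Sol_G(x)|$. It is also essential that the solvable-radical reduction be performed first, precisely so that $C_G(c)$ is a proper subgroup for every non-identity $c\in C_G(x)$ and the inductive hypothesis is genuinely available for those centralizers.
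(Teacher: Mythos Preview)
Your argument is correct and follows essentially the same route as the paper's proof: induction on $|G|$, a quotient reduction to reach the case of trivial centre, and then the conjugation action of $C_G(x)$ on $Sol_G(x)$ combined with the Burnside double-count $\sum_{a\in Sol_G(x)}|C_{C_G(x)}(a)|=\sum_{c\in C_G(x)}|Sol_{C_G(c)}(x)|$, grouping the right-hand side by $C_G(x)$-conjugacy classes. The only real difference is in the reduction step: the paper quotients by $Z(G)$ whenever $|Z(G)|>1$, which makes the bookkeeping slightly cleaner (since $Z(G)\subseteq C_G(x)$ one has $|C_G(x)/Z(G)|=|C_G(x)|/|Z(G)|$ directly), whereas you quotient by the larger subgroup $Sol(G)$ and compensate with the factorization $|C_G(x)|=|C_N(x)|\cdot|\overline{C_G(x)}|$ together with two applications of Lagrange; both variants land in the same place, namely $Z(G)=\{e_G\}$, which is exactly what the conjugation argument needs so that $C_G(c)$ is proper for every $c\neq e_G$.
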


\begin{rem}
In general, $Sol_G(x) \varsubsetneq C_G(x)Sol_G(x)$. By GAP~\cite{GAP} we get \newline $|Sol_{A_5}((2,3)(4,5))| = 36$ while $C_{A_5}((2,3)(4,5))Sol_{A_5}((2,3)(4,5)) = A_5$.
\end{rem}

\begin{proof}
We use induction on $|G|$. If $|G| = 1$ then $|C_G(x)| = |Sol_{G}(x)| = 1$ $\forall x \in G$. Let's assume the statement holds for every group $G$ which satisfies $|G| < m$ for a certain constant $m \in \N$. Let $G$ be a group with $|G| = m$. 
\newline
If $|Z(G)| > 1$ then the statement holds for $G/Z(G)$. 
\newline
Therefore $|C_{G/Z(G)}(xZ(G))| \mid |Sol_{G/Z(G)}(xZ(G))|$ $\forall x \in G$. 
Note that for every group $G$, $C_G(x)/Z(G) \leq C_{G/Z(G)}(xZ(G))$ - Indeed, 
\[
C_G(x)/Z(G) = \{aZ(G) \mid [a,x] = e_G \} \subseteq \{aZ(G) \mid [a,x] \in Z(G) \} = C_{G/Z(G)}(xZ(G))
\]
Also, by Lemma 2.11, $|Sol_G(x)/Z(G)| = \frac{|Sol_G(x)|}{|Z(G)|}$.
So on the one hand,
\[
\frac{|C_G(x)|}{|Z(G)|} = |C_G(x)/Z(G)| \mid |C_{G/Z(G)}(xZ(G))|
\]
On the other hand, 
\[
|C_{G/Z(G)}(xZ(G))| \mid |Sol_{G/Z(G)}(xZ(G))| = |Sol_G(x)/Z(G)| = \frac{|Sol_G(x)|}{|Z(G)|}
\]
Therefore,
\[
\frac{|C_G(x)|}{|Z(G)|} \mid |C_{G/Z(G)}(xZ(G))| \mid \frac{|Sol_G(x)|}{|Z(G)|} \Rightarrow 
\]
\[
\frac{|C_G(x)|}{|Z(G)|} \mid \frac{|Sol_G(x)|}{|Z(G)|} \Rightarrow
\]
\[ 
|C_G(x)| \mid |Sol_G(x)|
\]
We are left with the case $|Z(G)| = 1$. Let us denote $H = C_G(x)$, $A = Sol_G(x)$. If $h \in H$ and $a \in A$, $H$ acts on $A$ by $h*a = hah^{-1}$. Indeed, $a \in A \Leftrightarrow \langle a,x \rangle$ is solvable $\Leftrightarrow 
h\langle a,x \rangle h^{-1}$ is solvable $\Leftrightarrow \langle hah^{-1},hxh^{-1} \rangle = \langle hah^{-1},x \rangle$ is solvable $\Leftrightarrow hah^{-1} \in A$. 
\newline
So for $H_{a} = \{h \in H \mid hah^{-1} = a\} = H \cap C_G(a) = C_G(x) \cap C_G(a)$ and for 
$A_{h} = \{a \in A \mid hah^{-1} = a \} = A \cap C_G(h) = Sol_G(x) \cap C_G(h)$, we get 
\[
\sum\limits_{a \in Sol_G(x)}^{}|C_G(x) \cap C_G(a)| = \sum\limits_{h \in C_G(x)}^{}|Sol_G(x) \cap C_G(h)|
\]
In a different use of notation, the last equation can be shown as
\[
(*) \sum\limits_{a \in Sol_G(x)}^{}|C_{C_G(x)}(a)| = \sum\limits_{a \in C_G(x)}^{}|Sol_{C_G(a)}(x)|
\]
If $a \in Sol_G(x)$ then $conj_{C_G(x)}(a) \subseteq Sol_G(x)$. So $Sol_G(x)$ is a disjoint union of subsets in the form $conj_{C_G(x)}(a)$. Also, for every group $G$ and $x,g \in G$, \newline $gC_G(x)g^{-1} = \{gyg^{-1} \mid [y,x] = e_G \} = \{ gyg^{-1} \mid g[y,x]g^{-1} = e_G \} = \newline \{ gyg^{-1} \mid [gyg^{-1},gxg^{-1}] = e_G \} = \{ y \mid [y,gxg^{-1}] = e_G \} = C_G(gxg^{-1})$, \newline so $|C_G(x)|$ is constant on conjugacy classes.
\newline
Therefore, there exist $k \in \N$ and $a_1, a_2, ... , a_k \in Sol_G(x)$ such that
\newline \newline
$\sum\limits_{a \in Sol_G(x)}^{}|C_{C_G(x)}(a)| = \sum\limits_{i = 1}^{k}|conj_{C_G(x)}(a_i)||C_{C_G(x)}(a_i)| = 
\sum\limits_{i = 1}^{k}|C_G(x)| = k|C_G(x)|$.
\newline \newline
$Z(G) = 1$ and therefore $C_G(a) = G \Leftrightarrow a = e_G$. For $a \neq e_G$, $|C_G(a)| < |G| = m$. If $a \in C_G(x)$ then $x \in C_G(a)$ and by induction \newline $|C_{C_G(a)}(x)| \mid |Sol_{C_G(a)}(x)|$. Let us define 
$n(a, x) = \frac{|Sol_{C_G(a)}(x)|}{|C_{C_G(a)}(x)|}$. By induction, $n(a, x) \in \N$ $\forall a \in C_G(x) \setminus \{e_G\},$ \newline $x \in C_G(a)$.
Let $y \in C_G(x)$. $n(yay^{-1}, x) = \frac{|Sol_{C_G(yay^{-1})}(x)|}{|C_{C_G(yay^{-1})}(x)|} = 
\frac{|Sol_{C_G(a)}(y^{-1}xy)|}{|C_{C_G(x)}(yay^{-1})|} = \frac{|Sol_{C_G(a)}(x)|}{|C_{C_G(x)}(a)|} = 
\frac{|Sol_{C_G(a)}(x)|}{|C_{C_G(a)}(x)|} = n(a, x)$. We get:
\[
\sum\limits_{a \in C_G(x)}^{}|Sol_{C_G(a)}(x)| = \sum\limits_{a = e_G}^{}|Sol_{C_G(a)}(x)| \text{ } + \sum\limits_{a \in C_G(x) \setminus \{e_G\} }^{}|Sol_{C_G(a)}(x)| =
\]
\[
|Sol_G(x)| \text{ } + \sum\limits_{a \in C_G(x) \setminus \{e_G\} }^{}n(a, x)|C_{C_G(a)}(x)| = |Sol_G(x)| + \text{ } \sum\limits_{a \in C_G(x) \setminus \{e_G\} }^{}n(a, x)|C_{C_G(x)}(a)|
\]
$C_G(x)$ is a disjoint union of its conjugacy classes: \newline $C_G(x) = conj_{C_G(x)}(a_1) \uplus conj_{C_G(x)}(a_2) \uplus ... \uplus conj_{C_G(x)}(a_l)$, where $l \in \N$, $a_1, a_2, ... , a_l \in C_G(x)$ are representatives of the $l$ conjugacy classes 
\newpage
\noindent
and $a_l = e_G$. If $l = 1$, then $G = \{e_G\}$ and the proposition holds. If $l > 1$ we get:
\[
|Sol_G(x)| \text{ } + \sum\limits_{a \in C_G(x) \setminus \{e_G\} }^{}n(a, x)|C_{C_G(x)}(a)| = 
\]
\[
|Sol_G(x)| \text{ } + \text{ } \sum\limits_{i = 1}^{l-1} \sum\limits_{a \in conj_{C_G(x)}(a_i)}^{} n(a, x)|C_{C_G(x)}(a)| 
\]
\noindent
$n(a, x) = n(a_i, x)$ and $|C_{C_G(x)}(a)| = |C_{C_G(x)}(a_i)|$ for every $a \in conj_{C_G(x)}(a_i)$ and $1 \leq i < l$. Therefore, 
\[
|Sol_G(x)| \text{ } + \sum\limits_{i = 1}^{l-1} \sum\limits_{a \in conj_{C_G(x)}(a_i)}^{} n(a, x)|C_{C_G(x)}(a)| = 
\]
\[
|Sol_G(x)| \text{ } + \sum\limits_{i = 1}^{l-1} \sum\limits_{a \in conj_{C_G(x)}(a_i)}^{} n(a_i, x)|C_{C_G(x)}(a_i)| = 
\]
\[
|Sol_G(x)| \text{ } + \sum\limits_{i = 1}^{l-1} n(a_i, x)|C_{C_G(x)}(a_i)| \sum\limits_{a \in conj_{C_G(x)}(a_i)}^{} 1 = 
\]
\[
|Sol_G(x)| \text{ } + \sum\limits_{i = 1}^{l-1} n(a_i, x)|C_{C_G(x)}(a_i)||conj_{C_G(x)}(a_i)| = 
\]
\[
|Sol_G(x)| \text{ } + \sum\limits_{i = 1}^{l-1} n(a_i, x)|C_G(x)| = 
\]
\[
|Sol_G(x)| \text{ } + |C_G(x)|\sum\limits_{i = 1}^{l-1} n(a_i, x)
\]
$n(a_i, x) \in \N$ $\forall$ $1 \leq i \leq l-1$, so $\sum\limits_{i = 1}^{l-1} n(a_i, x) = t \in \N$. 
\newline \newline
If we return to equation (*) we get:
\[
k|C_G(x)| = |Sol_G(x)| \text{ } + t|C_G(x)|, \text{ } t, k \in \N \text{ } \Rightarrow |C_G(x)| \mid |Sol_G(x)|
\] 
\end{proof}

\begin{rem}
If we replace $Sol_G(x)$ with $Nil_G(x)$ in the last proof, is stays valid. So $|C_G(x)| \mid |Nil_G(x)|$ for every group $G$ and $x \in G$.
\end{rem}

\begin{prop}
Let $G$ be a group. Then $|G|$ divides $\sum\limits_{x \in G}^{}|Sol_G(x)|$.
\end{prop}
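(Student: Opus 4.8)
The plan is to reorganise the sum $\sum_{x \in G}|Sol_G(x)|$ as a sum over the conjugacy classes of $G$ and then to invoke Proposition 2.13 class by class.

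First I would record two conjugation-invariance facts, both already established above. By Lemma 2.12(2) we have $Sol_G(gxg^{-1}) = g\,Sol_G(x)\,g^{-1}$, so $|Sol_G(x)|$ depends only on the conjugacy class of $x$; and $C_G(gxg^{-1}) = g\,C_G(x)\,g^{-1}$, so $|C_G(x)|$ is likewise constant on conjugacy classes (this identity is verified inside the proof of Proposition 2.13). Let $x_1,\dots,x_n$ be representatives of the conjugacy classes $C_1,\dots,C_n$ of $G$, so that $|C_i| = [G:C_G(x_i)]$.

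Then the sum collapses along conjugacy classes:
\[
\sum_{x \in G}|Sol_G(x)| \;=\; \sum_{i=1}^{n}|C_i|\,|Sol_G(x_i)| \;=\; \sum_{i=1}^{n}[G:C_G(x_i)]\,|Sol_G(x_i)| \;=\; |G|\sum_{i=1}^{n}\frac{|Sol_G(x_i)|}{|C_G(x_i)|}.
\]
By Proposition 2.13, $|C_G(x_i)|$ divides $|Sol_G(x_i)|$ for every $i$, so each summand $|Sol_G(x_i)|/|C_G(x_i)|$ is a positive integer; hence the right-hand side is $|G|$ times an integer, which is exactly the claim.

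I do not anticipate a genuine obstacle here, since the substantial work is already contained in Proposition 2.13; the only point requiring care is the legitimacy of the grouping, which rests precisely on the two conjugation-invariance statements noted above. One could instead phrase the argument as a count of the set $\{(x,y)\in G\times G : \langle x,y\rangle \text{ is solvable}\}$ broken into its $G$-orbits under conjugation in the first coordinate, but that is the same computation in different language and still requires Proposition 2.13 to evaluate each orbit's contribution.
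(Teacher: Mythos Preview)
Your argument is correct and is essentially identical to the paper's own proof: both group the sum by conjugacy classes using the invariance $|Sol_G(gxg^{-1})|=|Sol_G(x)|$, then apply the orbit--stabilizer relation $|conj_G(x)|\cdot|C_G(x)|=|G|$ together with Proposition~2.13 to see that each class contributes a multiple of $|G|$. The only cosmetic discrepancy is the label you cite for the conjugation identity (it is Lemma~2.11(2) in the paper, not 2.12(2)).
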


\begin{proof}
Suppose that $G$ has $k$ conjugacy classes for some $k \in \N$ and $a_1, a_2, ... , a_k$ are representatives of the classes. By Lemma 2.11, $|Sol_G(x)|$ is constant on every conjugacy class. By Proposition 2.13 $|C_G(x)| \mid |Sol_G(x)|$ for every $x \in G$. 
Therefore there exist $n_1, n_2, ... , n_k \in \N$ such that
\[
\sum\limits_{x \in G}^{}|Sol_G(x)| = \sum\limits_{i = 1}^{k}|conj_G(a_i)||Sol_G(a_i)| =
\]
\[
\sum\limits_{i = 1}^{k}|conj_G(a_i)||C_G(a_i)|n_i = \sum\limits_{i = 1}^{k}|G|n_i = |G|\sum\limits_{i = 1}^{k}n_i \Rightarrow
\]
\[
|G| \text{ divides } \sum\limits_{x \in G}^{}|Sol_G(x)|
\]
\end{proof}

\newpage
\noindent

\begin{rem}
Again, the last proof is valid for nilpotentizers. So $|G|$ divides $\sum\limits_{x \in G}^{}|Nil_G(x)|$ for every group $G$.
\end{rem}

\begin{defin}
A group $G$ is called an {\bf S-group} if $Sol_G(x) \leq G$ $\forall x \in G$.
\end{defin}

\noindent
By Lemma 2.10, if $G$ is an S-group and $H \leq G$  then $H$ is an S-group. Also, note that if $G$ is solvable then $\langle x,y \rangle$ is solvable $\forall x,y \in G \Rightarrow \forall x \in G$ $Sol_G(x) = G \leq G$. So if $G$ is solvable then $G$ is an S-group. Now, in few simple steps, we will prove that the opposite is also true.

\begin{lem}
Let $G$ be a group and $x,y \in G$ such that $O(x) = O(y) = 2$. Then $\langle x, y \rangle$ is solvable.
\end{lem}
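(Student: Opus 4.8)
The plan is to show that $\langle x,y\rangle$ is the familiar dihedral-type group generated by two involutions, and in particular has derived length at most $2$. Set $z=xy$. Since $x^{-1}=x$ and $y^{-1}=y$, the element $x$ conjugates $z$ to
\[
xzx^{-1}=x(xy)x=yx=(xy)^{-1}=z^{-1},
\]
and symmetrically $y z y^{-1}=y(xy)y=yx=z^{-1}$. Hence both generators of $\langle x,y\rangle$ normalize the cyclic subgroup $\langle z\rangle$, so $\langle z\rangle\vartriangleleft\langle x,y\rangle$.

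Next I would examine the quotient $Q=\langle x,y\rangle/\langle z\rangle$. It is generated by the images $\bar x$ and $\bar y$, each of order dividing $2$; moreover $\bar x\bar y=\overline{xy}=\bar z=\bar 1$, so $\bar y=\bar x^{-1}=\bar x$ and $Q=\langle\bar x\rangle$ is cyclic of order at most $2$. Thus $\langle z\rangle$ is abelian and $Q$ is abelian, which gives that $\langle x,y\rangle$ is metabelian, hence solvable. (When $G$ is finite this subgroup is dihedral of order $2\cdot O(xy)$, but we need nothing beyond solvability.)

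Finally I would note the degenerate cases cause no trouble: if $x=y$, or more generally if $xy$ has small order, the argument above still applies verbatim, since at no point did I assume $\langle z\rangle$ is proper or nontrivial. There is no real obstacle here; the only thing to be careful about is to verify the conjugation identity $xzx^{-1}=z^{-1}$ correctly using $x^2=y^2=e_G$, from which everything else is immediate.
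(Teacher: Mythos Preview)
Your proof is correct and follows essentially the same approach as the paper, which simply notes that $\langle x,y\rangle$ is dihedral and hence solvable. You have merely written out that dihedral structure explicitly---the normal cyclic subgroup $\langle xy\rangle$ together with the quotient of order at most $2$---thereby verifying metabelianity directly.
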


\begin{proof}
$G$ is dihedral and hence solvable.
\end{proof}

\begin{lem}
Let $G$ be a group and let $N \vartriangleleft G$ such that $N \subseteq Sol(G)$. Then $G$ is an S-group $\Leftrightarrow G/N$ is an S-group.
\end{lem}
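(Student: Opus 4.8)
The plan is to reduce everything to Lemma 2.11(3), which under the standing hypotheses $N \vartriangleleft G$, $N \subseteq Sol(G)$ identifies $Sol_{G/N}(xN)$ with the image $Sol_G(x)/N$, together with the correspondence (lattice) theorem for the normal subgroup $N$.

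First I would record that $N \subseteq Sol_G(x)$ for every $x \in G$. This is immediate: $e_G \in Sol_G(x)$, and by Lemma 2.7 we have $Sol(G)\,Sol_G(x) = Sol_G(x)$, so $Sol(G) = Sol(G)\{e_G\} \subseteq Sol_G(x)$; since $N \subseteq Sol(G)$ by hypothesis, $N \subseteq Sol_G(x)$. In particular $Sol_G(x)$ is a union of cosets of $N$, so the subset $Sol_G(x)/N := \{aN \mid a \in Sol_G(x)\}$ of $G/N$ is well defined; it is the image of $Sol_G(x)$ under the canonical projection $\pi\colon G \to G/N$, and $Sol_G(x) = \pi^{-1}\bigl(Sol_G(x)/N\bigr)$ because $Sol_G(x) \supseteq N = \ker\pi$. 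Since images of subgroups under homomorphisms are subgroups and preimages of subgroups under homomorphisms are subgroups, the correspondence theorem gives, for each fixed $x$, the equivalence $Sol_G(x) \leq G \Leftrightarrow Sol_G(x)/N \leq G/N$.

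Next I would invoke Lemma 2.11(3) to write $Sol_{G/N}(xN) = Sol_G(x)/N$, and chain it with the previous equivalence to obtain, for every $x \in G$,
\[
Sol_G(x) \leq G \ \Leftrightarrow \ Sol_{G/N}(xN) \leq G/N .
\]
Finally I would quantify over $x$: $G$ is an S-group iff $Sol_G(x) \leq G$ for all $x \in G$, iff $Sol_{G/N}(xN) \leq G/N$ for all $x \in G$; and since every element of $G/N$ has the form $xN$ with $x \in G$, this last condition says exactly that $G/N$ is an S-group, completing both implications at once.

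As for the main obstacle: there really isn't one, since all the substance is packaged inside Lemma 2.11(3). The only point requiring (minor) care is the verification that $N \subseteq Sol_G(x)$, which is what makes the quotient set $Sol_G(x)/N$ meaningful and the correspondence theorem applicable; once that is in place, the rest is bookkeeping with quantifiers.
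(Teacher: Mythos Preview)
Your proof is correct. It rests on the same key input as the paper's---Lemma 2.11(3)---but the packaging differs slightly. The paper handles the forward direction exactly as you do (image of a subgroup is a subgroup), while for the backward direction it verifies closure of $Sol_G(x)$ under multiplication by hand: from $a,b \in Sol_G(x)$ it passes to $G/N$, uses that $Sol_{G/N}(xN)$ is a subgroup to obtain $abN \in Sol_{G/N}(xN)$, and then lifts solvability of $\langle abN, xN\rangle$ back to $\langle ab,x\rangle$ via the derived series and the solvability of $N$. You instead first note that $N \subseteq Sol_G(x)$ (so that $Sol_G(x) = \pi^{-1}\bigl(Sol_G(x)/N\bigr)$) and then invoke the correspondence theorem to obtain both implications simultaneously. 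Your route is marginally cleaner and avoids re-deriving the solvability-lifting step already built into Lemma 2.11(3); the paper's route is more explicit but equally short.
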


\begin{proof}
Let $x \in G$. $Sol_{G/N}(xN) = Sol_G(x)/N$, so if $G$ is an S-group then so is $G/N$. Suppose $G/N$ is an S-group. If $a,b \in Sol_G(x)$ then $\langle a,x \rangle$, $\langle b,x \rangle$ are solvable $\Rightarrow \langle aN,xN \rangle$, $\langle bN,xN \rangle$ are solvable $\Rightarrow \langle abN,xN \rangle$ is solvable $\Rightarrow \langle ab,x \rangle ^ {(k)} \subseteq N$ for some $k \in \N \Rightarrow \langle ab,x \rangle ^ {(k+l)} \subseteq N^{(l)} = \{e_G\}$ for some $l \in \N \Rightarrow \langle ab,x \rangle$ is solvable $\Rightarrow G$ is an S-group.
\end{proof}

\begin{prop}
Let $G$ be a simple S-group. Then $G$ is abelian.
\end{prop}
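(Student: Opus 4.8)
The plan is to prove the statement by contradiction: assume $G$ is a \emph{non-abelian} simple S-group and derive that $G$ is solvable, which is absurd. The engine of the argument is the observation that for a suitably chosen non-identity element $t$, the S-group hypothesis squeezes $Sol_G(t)$ to be all of $G$, while $Sol_G(t)=G$ is impossible for $t\neq e$ in a group whose solvable radical is trivial.

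First I would fix an involution $t\in G$. Since $G$ is non-abelian simple it is not solvable, so by the Feit--Thompson odd order theorem $|G|$ is even and such a $t$ exists. For every $g\in G$ the conjugate $gtg^{-1}$ is again an involution, so by Lemma~2.19 (two involutions generate a solvable group) $\langle t, gtg^{-1}\rangle$ is solvable; that is, the entire conjugacy class $t^{G}$ is contained in $Sol_G(t)$. Because $G$ is an S-group, $Sol_G(t)$ is a subgroup of $G$, hence it contains the normal closure $\langle t^{G}\rangle$. This normal closure is a nontrivial normal subgroup of the simple group $G$, so it equals $G$; therefore $Sol_G(t)=G$.

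To finish, I would translate $Sol_G(t)=G$ back into the language of solvabilizers: it says $\langle t,y\rangle$ is solvable for every $y\in G$, i.e. $t\in Sol_G(y)$ for all $y$, so by Lemma~2.9(5) we get $t\in\bigcap_{y\in G}Sol_G(y)=Sol(G)$, which equals $S(G)$ by Theorem~2.4. Thus $S(G)$ is a nontrivial normal subgroup of the simple group $G$, forcing $S(G)=G$ and hence $G$ solvable --- contradicting the choice of $G$. Therefore there is no non-abelian simple S-group, so every simple S-group is abelian.

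I do not anticipate a genuine obstacle; conceptually everything hinges on picking $t$ so that its whole conjugacy class lies in $Sol_G(t)$, which is precisely why Lemma~2.19 was recorded, and the only non-elementary input is the existence of an involution in a non-solvable group, supplied by the Feit--Thompson theorem. The trivial cases ($G$ abelian, or $|G|=1$) require no argument, and the contradiction above is invoked only when $G$ is non-abelian.
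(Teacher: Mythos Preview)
Your proof is correct and follows essentially the same approach as the paper: both use Feit--Thompson to obtain an involution $t$, invoke Lemma~2.19 to see that every conjugate of $t$ lies in $Sol_G(t)$, and then combine simplicity with the S-group hypothesis to force $Sol_G(t)=G$, hence $t\in Sol(G)$ and $G$ solvable. The only cosmetic difference is that the paper packages the normality step via the core $Core_G(Sol_G(t))=\bigcap_{g}Sol_G(gtg^{-1})$, while you use the normal closure $\langle t^{G}\rangle\subseteq Sol_G(t)$.
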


\begin{proof}
Let $G$ be a simple S-group. If $|G|$ is odd, then by Feit-Thompson theorem (~\cite{ROB} 5.4 ) $G$ is solvable. If $|G|$ is even, then by Cauchy's theorem there exists $a \in G$ such that $O(a) = 2$. By Lemma 2.11
\[
Core_G(Sol_G(a)) = \bigcap_{g \in G}(gSol_G(a)g^{-1}) = \bigcap_{g \in G}(Sol_G(gag^{-1})) \vartriangleleft G
\]
$O(a) = O(gag^{-1}) = 2$ $\forall g \in G$, so by Lemma 2.19 $a \in Core_G(Sol_G(a))$. $G$ is simple and $a \in Core_G(Sol_G(a)) \vartriangleleft G$, so $Core_G(Sol_G(a)) = G \Rightarrow$ \newline $Sol_G(a) = G \Rightarrow a \in Sol(G) \vartriangleleft G \Rightarrow Sol(G) = G \Rightarrow G$ is solvable. $G$ is simple and solvable $\Rightarrow$ $G$ is abelian.
\end{proof}

\begin{prop}
Let $G$ be a group. Then $G$ is solvable $\Leftrightarrow G$ is an S-group.
\end{prop}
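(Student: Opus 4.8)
The forward implication is already observed in the text following Definition~2.18: if $G$ is solvable then $\langle x,y\rangle$ is solvable for all $x,y\in G$, hence $Sol_G(x)=G$ for every $x$, so $G$ is an S-group. The plan is therefore to prove the converse --- that every S-group is solvable --- by induction on $|G|$; equivalently, I would assume for contradiction that $G$ is a non-solvable S-group of smallest possible order. The inputs I expect to use are: every subgroup of an S-group is an S-group (Lemma~2.10); $G$ is an S-group if and only if $G/N$ is, whenever $N\vartriangleleft G$ and $N\subseteq Sol(G)$ (Lemma~2.20); a simple S-group is abelian (Proposition~2.21); and $Sol(G)=S(G)$ is solvable (Theorem~2.4).

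The first step is to force $Sol(G)=1$ in such a minimal counterexample. Since $G$ is not solvable, $Sol(G)\neq G$; and if $Sol(G)\neq 1$, then $1<|G/Sol(G)|<|G|$, so by Lemma~2.20 the quotient $G/Sol(G)$ is an S-group of smaller order, hence solvable by minimality. As $Sol(G)$ is solvable and an extension of a solvable group by a solvable group is solvable, $G$ would be solvable, a contradiction; thus $Sol(G)=1$. The second step is to show $G$ is simple: if $M\vartriangleleft G$ with $1<M<G$, then $M$ is a proper subgroup of the S-group $G$ and therefore itself an S-group (Lemma~2.10) of smaller order, hence solvable by minimality; but then $M$ is a nontrivial solvable normal subgroup of $G$, so $M\subseteq S(G)=Sol(G)=1$, a contradiction. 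Since $G$ is nontrivial, its only normal subgroups are $1$ and $G$, i.e.\ $G$ is simple --- and then Proposition~2.21 makes $G$ abelian, hence solvable, contradicting the choice of $G$. Therefore no non-solvable S-group exists.

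I do not expect a genuine obstacle here: the real work is buried in Proposition~2.21 (and, through Lemma~2.19, in the Feit--Thompson theorem), while both reduction steps are short. The one place that wants care is the first step --- verifying that Lemma~2.20 really applies with $N=Sol(G)$ and that passing to $G/Sol(G)$ strictly decreases the order --- since this is precisely what excludes a nontrivial solvable radical in a minimal counterexample.
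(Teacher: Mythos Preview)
Your proposal is correct and follows essentially the same minimal-counterexample strategy as the paper, invoking Lemma~2.10, Lemma~2.20, and Proposition~2.21 in the same roles. The only cosmetic difference is organizational: the paper splits directly into ``$G$ simple'' versus ``$G$ not simple'' (in the latter case any proper nontrivial $N\vartriangleleft G$ is a smaller S-group, hence solvable, so $N\subseteq Sol(G)$ and $G/N$ is a smaller S-group, hence solvable), whereas you first force $Sol(G)=1$ and then deduce simplicity --- but the content is the same.
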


\begin{proof}
Assume that there exists at least one non-solvable S-group. Among those groups, denote $G$ as the smallest one (i.e., if $K$ is a non-solvable S-group then $|G| \leq |K|$). If $G$ is simple, then by the last proposition $G$ is solvable, a contradiction. If $G$ is not simple, then there exists $N \vartriangleleft G$, $1 < |N| < |G|$. So $N$ is a smaller S-group and therefore solvable $\Rightarrow N \subseteq Sol(G)$. \newline By Lemma 2.20, $G/N$ is an S-group. $|G/N| < |G|$, so $G/N$ is also solvable. $N, G/N$ are solvable $\Rightarrow G$ is solvable, a contradiction. Therefore, there are no non-solvable S-groups $\Rightarrow$ every S-group is solvable. 
\end{proof}
\noindent
The last result is quite strong. It is a new, equivalent condition for solvability. We can write this result in a different way: If for every $a,b,x \in G$ the property ``$\langle a,x \rangle, \langle b,x \rangle$ are solvable $\Rightarrow \langle ab,x \rangle$ is solvable'' holds, then $G$ is solvable. It almost seems that we obtained this result too easy, but it's not the case. Note that we used the Feit-Thompson theorem and Theorem 2.4 in the proof.

\section{Non-Solvable Graph of a Group}

\begin{defin}
Let $G$ be a group. {\bf The non-solvable graph} of $G$, denoted by ${\mathcal S}_{G}$, is a simple graph with group elements as vertices, such that $(x,y)$ is an edge $\Leftrightarrow \langle x,y \rangle$ is not solvable.
\end{defin}
\noindent
By Thompson's theorem, ${\mathcal S}_{G}$ is an empty graph $\Leftrightarrow G$ is solvable. Therefore, 
${\mathcal S}_{G}$ is interesting only if $G$ is not solvable. It is clear that $Sol(G)$ elements are exactly the isolated vertices in ${\mathcal S}_{G}$. Thus, if $G$ is not solvable, it is natural to choose to explore the induced graph of ${\mathcal S}_{G}$ with respect to $G \setminus Sol(G)$, which will be denoted $\widehat{{\mathcal S}_{G}}$.
Note that the degree of a vertex $x$ in ${\mathcal S}_{G}$ is equal to its degree in $\widehat{{\mathcal S}_{G}}$. Also, since every vertex in $\widehat{{\mathcal S}_{G}}$ is taken from $G \setminus Sol(G)$, all vertices have an order greater than 1.
\newline
The idea of building a graph from a group is not new, as shown in the next definition.
\begin{defin}(~\cite{NNGG} )
Let $G$ be a group. {\bf The non-nilpotent graph} of $G$, denoted by ${\mathcal N}_{G}$, is a simple graph with group elements as vertices, such that $(x,y)$ is an edge $\Leftrightarrow \langle x,y \rangle$ is not nilpotent.
\end{defin}
\begin{rem}
${\mathcal S}_{G}$ is obviously a subgraph of ${\mathcal N}_{G}$. $nil(G) = Z^{*}(G)$ (~\cite{NNGG} ) and therefore $\widehat{{\mathcal S}_{G}}$ is a subgraph of $\widehat{{\mathcal N}_{G}}$.
\end{rem}

\begin{thm} (~\cite{TLC} 6.4)
Let $G$ be a non-solvable group. Suppose $x,y \in G$ such that $x,y \notin Sol(G)$. Then there exists $s \in G$ such that $\langle x,s \rangle$, $\langle y,s \rangle$ are not solvable.
\end{thm}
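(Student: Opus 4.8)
The plan is to reformulate the statement, reduce to the case of trivial solvable radical, and then argue by a minimal‑counterexample induction on $|G|$ in which all of the difficulty is concentrated in one statement about minimal normal subgroups that ultimately depends on the classification of finite simple groups.

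First I would observe that the conclusion is equivalent to $Sol_G(x)\cup Sol_G(y)\neq G$, since an $s$ as required is exactly one lying in neither solvabilizer. Note that $x,y\notin Sol(G)$ already forces $Sol_G(x)\neq G\neq Sol_G(y)$ by Lemma 2.9(5) (if $Sol_G(x)=G$ then $x\in Sol_G(z)$ for every $z$, hence $x\in\bigcap_z Sol_G(z)=Sol(G)$), but a union of two proper subsets can still be everything, so more is needed. Next I would reduce to the case $Sol(G)=1$: setting $N=Sol(G)$, Lemma 2.11(3) gives $Sol_{G/N}(zN)=Sol_G(z)/N$, and $Sol(G/N)=S(G/N)=1$, while a quotient of a solvable group is solvable, so non-solvability of $\langle xN,sN\rangle$ and $\langle yN,sN\rangle$ implies non-solvability of $\langle x,s\rangle$ and $\langle y,s\rangle$; also $x,y\notin N$ means $xN,yN\neq N$. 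So it suffices to treat a group $G$ with $Sol(G)=1$ (automatically non-solvable) and $x,y\neq 1$.

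Now I would take a counterexample of this reduced form with $|G|$ minimal. First reduction: if a proper normal subgroup $H\vartriangleleft G$ contains both $x$ and $y$, then $S(H)$ is characteristic in $H$, hence normal and solvable in $G$, so $S(H)\subseteq S(G)=1$; thus $H$ is non-solvable with $x,y\notin Sol(H)=1$, and minimality applied to $H$ would supply a valid $s\in H\subseteq G$ — contradiction. So no proper normal subgroup of $G$ contains both $x$ and $y$. Fix a minimal normal subgroup $M$ of $G$; since $Sol(G)=1$, $M$ is non-abelian, so $M\cong T^k$ for a non-abelian finite simple group $T$, and $C:=C_G(M)$ is normal in $G$ with $C\cap M=1$ and $C<G$ (as $M$ is non-abelian). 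If $x,y\in C$ we contradict the first reduction. If $x\notin C$ while $y\in C$, I would use that $MC=M\times C$: for $m\in M$ and $c\in C$ the subgroup $\langle y,mc\rangle\leq M\times C$ has cyclic projection to $M$ and projection $\langle y,c\rangle$ to $C$, so it is non-solvable whenever $\langle y,c\rangle$ is; choosing such a $c$ (possible since $y\neq 1=Sol(C)$, again by Lemma 2.9(5)) makes $\langle y,mc\rangle$ non-solvable for every $m$, while $\langle x,mc\rangle$ surjects onto $\langle xC,mC\rangle$ in $G/C$, so it is enough to make that non-solvable. In every case we are thus reduced to the following claim: if $L$ has a minimal normal subgroup $M'\cong T^k$ and $z\in L$ acts non-trivially on $M'$ by conjugation, there is $m'\in M'$ with $\langle z,m'\rangle$ non-solvable, and if $z_1,z_2\in L$ both act non-trivially there is a common such $m'$; equivalently, the deficiency sets $B_z:=\{m'\in M':\langle z,m'\rangle\text{ is solvable}\}$ are proper and $B_{z_1}\cup B_{z_2}\neq M'$.

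This claim is the crux and the main obstacle, and it is where the classification of finite simple groups enters. Writing $z$ through its action on $M'\cong T_1\times\cdots\times T_k$ — a permutation of the factors together with automorphisms of the $T_i$, possibly outer field or graph automorphisms, whose proper invariant subgroups obstruct the naive argument that $\langle m'\rangle$ and its $z$-conjugates already generate $M'$ — one estimates $|B_z|$. The key input is the CFSG-based fact that a non-abelian finite simple group is generated with high probability by a random element together with its image under a fixed automorphism (the proportion of $t\in T$ with $\langle t,\,t^{\alpha},\,t^{\alpha^{2}},\dots\rangle\neq T$ tends to $0$ as $|T|\to\infty$ for any fixed $1\neq\alpha\in\mathrm{Aut}(T)$), which gives $|B_z|=o(|M'|)$ and hence $|B_{z_1}|+|B_{z_2}|<|M'|$ once $|T|$ is large enough. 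The finitely many remaining cases — small $T$ such as $A_5,A_6,\mathrm{PSL}_2(7),\dots$ and small $k$ — are then settled by direct computation; here one must be careful, since the bound $|B_z|\leq\frac12|M'|$ is genuinely false (for instance $|Sol_{A_5}((2,3)(4,5))|=36>30$), so in these cases one verifies $B_{z_1}\cup B_{z_2}\neq M'$ directly, using for example that $1\in B_{z_1}\cap B_{z_2}$. Finally, lifting the resulting $m'$ back through the reductions produces the required $s$ and contradicts the minimality of $G$.
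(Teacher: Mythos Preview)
The paper does not supply its own proof of this statement: Theorem~3.4 is quoted from \cite{TLC} (their Theorem~6.4) and used as a black box, so there is nothing in the present paper to compare your attempt against.

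That said, your outline is structurally sound and in the spirit of the argument in \cite{TLC}. The reduction to $Sol(G)=1$ via Lemma~2.11(3) is correct; the minimal-counterexample setup, the choice of a non-abelian minimal normal subgroup $M\cong T^{k}$, and the case split according to membership in $C=C_G(M)$ are all valid; and your handling of the mixed case $x\notin C$, $y\in C$ by writing $s=mc\in M\times C$ and decoupling the two non-solvability conditions is clean. You also correctly isolate the heart of the matter as the claim $B_{z_1}\cup B_{z_2}\neq M'$ for two elements acting non-trivially on $M'\cong T^{k}$.

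Where the proposal stops short of a proof is precisely there. The generation-probability statement you invoke is formulated for a single simple factor; passing to $M'=T^{k}$ with $z$ acting by a permutation of the factors composed with automorphisms requires a genuine further reduction (typically to a single $z$-orbit on the factors, and then to a twisted diagonal copy of $T$), which you have not carried out. Moreover, the ``finitely many remaining cases'' are not a finite list of groups: if the asymptotic input is in $|T|$, then for each of the finitely many small $T$ one must still treat all $k\ge 1$, and that cannot be done by bare computation. In \cite{TLC} these points are handled by additional structural arguments and a sharper conclusion (one may even take $s$ of prescribed prime-power order). So your sketch correctly identifies the architecture and the CFSG dependence, but the decisive claim remains a plausibility outline rather than a proof.
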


\noindent
In terms of $\widehat{{\mathcal S}_{G}}$, the latter theorem states that $\widehat{{\mathcal S}_{G}}$
is connected and its diameter is at most 2 (for a non-solvable group).

\begin{lem}
Let $G$ be a non-solvable group. Then $diam(\widehat{{\mathcal S}_{G}}) \neq 1$.
\end{lem}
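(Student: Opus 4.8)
The plan is to argue by contradiction. Suppose $diam(\widehat{\mathcal{S}_G}) = 1$; then $\widehat{\mathcal{S}_G}$ is a complete graph, i.e.\ $\langle x,y\rangle$ is non-solvable for every pair of distinct $x,y \in G \setminus Sol(G)$. The strategy is to convert this strong hypothesis into exact size information about the solvabilizers $Sol_G(x)$ and then contradict the non-solvability of $G$ by invoking the divisibility lemmas already established.

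The first step is to show that completeness of $\widehat{\mathcal{S}_G}$ forces $Sol_G(x) = Sol(G) \cup \{x\}$ for every $x \in G \setminus Sol(G)$. The inclusion $\supseteq$ holds because $x \in Sol_G(x)$ trivially and $Sol(G) = \bigcap_{g \in G} Sol_G(g) \subseteq Sol_G(x)$ (Lemma 2.9(5), or Lemma 2.7). For $\subseteq$: if $a \in Sol_G(x)$ then either $a \in Sol(G)$, or $a \in G \setminus Sol(G)$, in which case $\langle a,x\rangle$ being solvable forces $a = x$ by completeness. Since $x \notin Sol(G)$ the union is disjoint, so $|Sol_G(x)| = |Sol(G)| + 1$.

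The second step then follows at once from the lemmas. By Lemma 2.8, $|Sol(G)|$ divides $|Sol_G(x)| = |Sol(G)| + 1$, hence $|Sol(G)| = 1$ and $|Sol_G(x)| = 2$ for every $x \neq e_G$. By Lemma 2.12, $O(x)$ divides $|Sol_G(x)| = 2$, so every non-identity element of $G$ has order $2$; but then $G$ has exponent $2$, hence is abelian, hence solvable — contradicting the hypothesis. Therefore $diam(\widehat{\mathcal{S}_G}) \neq 1$ (and, combined with Theorem 3.4, this gives $diam(\widehat{\mathcal{S}_G}) = 2$).

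The argument is short, so I do not expect a genuine obstacle; the only point needing a little care is the first step — establishing $Sol_G(x) = Sol(G) \cup \{x\}$ and keeping track of the containment $Sol(G) \subseteq Sol_G(x)$ and of disjointness. If one prefers to avoid the counting, a direct exhibition of a non-edge also works: take any $x \in G \setminus Sol(G)$; if $O(x) > 2$ then $x \neq x^{-1}$, both elements lie in $G \setminus Sol(G)$, and $\langle x,x^{-1}\rangle = \langle x\rangle$ is solvable; if $O(x) = 2$ then $x \notin Z(G)$ (otherwise $\langle x\rangle$ would be a normal solvable subgroup, forcing $x \in Sol(G)$), so some conjugate $y = gxg^{-1} \neq x$ also lies in $G \setminus Sol(G)$ and $\langle x,y\rangle$ is solvable by Lemma 2.19.
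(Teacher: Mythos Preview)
Your main argument is correct and takes a somewhat different route from the paper's. The paper argues directly: for $x \notin Sol(G)$ with $x \neq x^{-1}$, the pair $(x,x^{-1})$ is already a non-edge, so completeness forces every $x \notin Sol(G)$ to have order $2$; it then shows that $A = (G \setminus Sol(G)) \cup \{e_G\}$ is closed under products (since $\langle xy,x\rangle = \langle x,y\rangle$ non-solvable forces $xy \notin Sol(G)$), hence a subgroup of exponent $2$, hence abelian and solvable; finally $G = A \cup Sol(G)$ together with the fact that a group is never the union of two proper subgroups yields the contradiction. Your main proof instead leverages the divisibility machinery already built (Lemmas~2.8 and~2.12) to get $|Sol(G)| = 1$ and then exponent $2$ for all of $G$ at once, after which abelianness is immediate --- this is arguably cleaner, as it bypasses the auxiliary subgroup $A$ and the ``union of two proper subgroups'' step. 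Your alternative argument matches the paper's opening move ($x$ versus $x^{-1}$) but handles the order-$2$ case more directly than the paper does, exhibiting a non-edge via a non-central conjugate and Lemma~2.19 rather than constructing $A$.
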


\begin{proof}
We start the proof similarly to the proof of Proposition 2.1 in~\cite{NCG}: 
Suppose, for a contradiction, that $diam(\widehat{{\mathcal S}_{G}}) = 1$. \newline Let $x \notin Sol(G)$. If $x \neq x^{-1}$, then $x,x^{-1} \notin Sol(G)$ and $\langle x,x^{-1} \rangle = \langle x \rangle$ is solvable, so 
$diam(\widehat{{\mathcal S}_{G}}) \geq d(x,x^{-1}) \geq 2$ and we get a contradiction. So $x = x^{-1}$ for all 
$x \notin Sol(G)$. Now we continue the proof as follows: If $x,y \notin Sol(G)$ are distinct, then 
$diam(\widehat{{\mathcal S}_{G}}) = 1 \Rightarrow \langle x,y \rangle$ is not solvable. Therefore, 
$\langle xy,x \rangle = \langle x,y \rangle$ is not solvable, so $xy \notin Sol(G)$. Take 
$A = (G \setminus Sol(G)) \cup \{e_{G}\}$. $e_{G} \in A$ and $A$ is closed under multiplication and the inverse operation $\Rightarrow A$ is a subgroup of $G$. $O(a)=2$ $\forall a \in A \Rightarrow A$ is abelian and therefore solvable. Now, $A \cup Sol(G) = G \Rightarrow G = A$ or $G = Sol(G) \Rightarrow G$ is solvable, a contradiction.
So $diam(\widehat{{\mathcal S}_{G}}) \neq 1$ in case $G$ is a non-solvable group.
\end{proof}

\noindent
By the last lemma, we conclude that if $G$ is not solvable then $diam(\widehat{{\mathcal S}_{G}}) = 2$.

\begin{lem}
Let $G$ be a group and let $x \in G$. Then $|C_G(x)| \mid |deg(x)|$ 
\end{lem}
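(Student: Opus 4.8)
The plan is to rewrite the degree of $x$ in $\mathcal{S}_G$ (equivalently in $\widehat{\mathcal{S}_G}$, since the two degrees coincide) as the complement of a solvabilizer, and then let Proposition 2.13 do the work. First I would note that $\mathcal{S}_G$ is a simple graph, so $x$ is not adjacent to itself; and in any case $\langle x,x\rangle=\langle x\rangle$ is solvable, so $x\in Sol_G(x)$. The neighbours of $x$ are exactly those $y\in G$ with $\langle x,y\rangle$ not solvable, i.e.\ the set $G\setminus Sol_G(x)$. Hence $\deg(x)=|G|-|Sol_G(x)|$.

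Next I would invoke two divisibilities: by Lagrange's theorem $|C_G(x)|\mid |G|$, and by Proposition 2.13 $|C_G(x)|\mid |Sol_G(x)|$. A common divisor of $|G|$ and $|Sol_G(x)|$ divides their difference, so $|C_G(x)|$ divides $|G|-|Sol_G(x)|=\deg(x)$, which is the assertion.

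There is essentially no obstacle here: the entire substance is already contained in Proposition 2.13, and the only additional ingredient is the elementary identification $\deg(x)=|G|-|Sol_G(x)|$. The one point worth stating carefully is the no-loops convention for $\mathcal{S}_G$, which is precisely what makes this identity hold exactly (rather than being off by one); since $x$ itself lies in $Sol_G(x)$, removing it from $G$ does not disturb the count of neighbours.
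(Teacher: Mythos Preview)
Your proof is correct and follows essentially the same approach as the paper: write $\deg(x)=|G|-|Sol_G(x)|$, then observe that $|C_G(x)|$ divides both $|G|$ (Lagrange) and $|Sol_G(x)|$ (Proposition~2.13), hence their difference. Your extra remark about the no-loops convention is a nice clarification, but the argument is otherwise identical to the paper's.
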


\begin{proof}
$Sol_G(x) \uplus Adj(x) = G \Rightarrow |Sol_G(x)| + deg(x) = |G|$. $|C_G(x)|$ divides $|G|$ and by Proposition 2.13 $|C_G(x)|$ divides $|Sol_G(x)| \Rightarrow |C_C(x)|$ divides $deg(x)$.
\end{proof}

\begin{rem}
As we saw in the last section, $|C_G(x)|$ divides $|Nil_G(x)|$. Therefore, $|C_G(x)| \mid |deg(x)|$ in the graph ${\mathcal N}_{G}$. 
\end{rem}

\begin{lem}
Let $G$ be a non-solvable group and $x$ be a vertex in $\widehat{{\mathcal S}_{G}}$. Then $O(x) < deg(x)$.
\end{lem}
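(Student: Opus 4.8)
The plan is to show that $Adj(x)$, the neighbourhood of $x$ in ${\mathcal S}_{G}$, is a nonempty union of cosets of the cyclic group $\langle x\rangle$; this already forces $deg(x)$ to be a positive multiple of $O(x)$, so the entire content of the lemma reduces to excluding the extremal case $deg(x)=O(x)$ by an elementary argument with the element $xg$.

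First I would observe that, $x$ being a vertex of $\widehat{{\mathcal S}_{G}}$, we have $x\notin Sol(G)$, so there is some $y\in G$ with $\langle x,y\rangle$ non-solvable; hence $Adj(x)\neq\emptyset$ and $deg(x)\geq 1$. Next, recall from Lemma 2.12 (and its proof) that $Sol_G(x)$ is the union of all solvable subgroups of $G$ that contain $x$. Each such subgroup contains $\langle x\rangle$, hence is a disjoint union of left cosets of $\langle x\rangle$, and therefore so is $Sol_G(x)$, and so is its complement $Adj(x)=G\setminus Sol_G(x)$. In particular $deg(x)=|Adj(x)|$ is a positive multiple of $|\langle x\rangle|=O(x)$, so $deg(x)\geq O(x)$, and it remains only to rule out equality.

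Assume, for a contradiction, that $deg(x)=O(x)$. Then $Adj(x)$ consists of a single coset, say $Adj(x)=g\langle x\rangle$, and since $g\in Adj(x)$ the group $\langle x,g\rangle$ is not solvable. Now consider $xg$: since $\langle x,xg\rangle=\langle x,g\rangle$, the element $xg$ lies in $Adj(x)=g\langle x\rangle$ as well, so $xg=gx^{k}$ for some integer $k$, i.e. $g^{-1}xg=x^{k}$. As conjugation by $g$ is an automorphism, $\langle x^{k}\rangle=g^{-1}\langle x\rangle g$ has the same order as $\langle x\rangle$, whence $\langle x^{k}\rangle=\langle x\rangle$ and $g$ normalizes $\langle x\rangle$. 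Consequently $\langle x\rangle\vartriangleleft\langle x,g\rangle$ with cyclic quotient $\langle x,g\rangle/\langle x\rangle$, so $\langle x,g\rangle$ is metacyclic and hence solvable, contradicting the choice of $g$. Therefore $deg(x)\neq O(x)$, so $deg(x)\geq 2\,O(x)>O(x)$.

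I do not expect a real obstacle here: the description of $Sol_G(x)$ as the union of the solvable overgroups of $x$, already available in the excerpt, carries most of the weight, and the only new idea is the identity $\langle x,xg\rangle=\langle x,g\rangle$, which is exactly what converts ``$Adj(x)$ is a single coset'' into ``$g\in N_G(\langle x\rangle)$'' and then collapses $\langle x,g\rangle$ to a solvable group. The one point needing care is to fix a side for the cosets (left cosets throughout), which is harmless precisely because every solvable subgroup containing $x$ contains $\langle x\rangle$. As a side remark, this argument reproves $O(x)\mid deg(x)$ directly, independently of Lemma 3.6.
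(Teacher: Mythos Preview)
Your argument is correct and rests on the same contradiction as the paper's proof: once a single coset of $\langle x\rangle$ is forced to absorb an ``extra'' neighbour, conjugation shows $g$ normalizes $\langle x\rangle$, whence $\langle x,g\rangle$ is metacyclic and solvable. The paper packages this slightly differently: it explicitly lists $O(x)+1$ neighbours, namely $y,xy,\dots,x^{k-1}y$ together with $yx$, and shows the last one is new by the same normality-implies-solvability step. Your route through Lemma~2.12 (so that $Adj(x)$ is a union of cosets of $\langle x\rangle$ and hence $O(x)\mid deg(x)$) is a clean reorganisation of the same idea; as you note, it has the pleasant side effect of giving $deg(x)\geq 2\,O(x)$ immediately, which the paper obtains only afterwards as Corollary~3.9 by combining this lemma with Lemma~3.6.
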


\begin{proof}
$\widehat{{\mathcal S}_{G}}$ is connected and therefore there exists a vertex $y \neq x$ such that $\langle x,y \rangle$ is not solvable. Denote $O(x) = k$ and $A = \{y, xy, x^{2}y, ... , x^{k-1}y \} \cup \{yx\}$. Then $|A| = k+1$:
Indeed, If $x^{s}y = x^{t}y$ for some $0 \leq s < t \leq k-1$, then $x^{t-s} = e_G$, a contradiction to $O(x) = k$. Secondly, if $yx =  x^{t}y$ for some $0 \leq t \leq k-1$, then $yxy^{-1} = x^{t} \in \langle x \rangle \Rightarrow \langle x \rangle \vartriangleleft \langle x,y \rangle$. $\langle x \rangle$ is solvable and so is $\langle x,y \rangle / \langle x \rangle \cong \langle y \rangle$. Therefore, $\langle x, y \rangle$ is solvable, a contradiction. 
If $a \in A$, then $\langle x,a \rangle = \langle x,y \rangle$, which is not solvable. Thus, every element of $A$ is a neighbour of $x \Rightarrow deg(x) \geq k+1 > k = O(x)$.
\end{proof}

\begin{cor}
$2O(x) \leq deg(x)$ for every vertex $x$ in $\widehat{{\mathcal S}_{G}}$.
\end{cor}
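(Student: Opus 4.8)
The corollary asserts that $2\,O(x) \leq \deg(x)$ for every vertex $x$ of $\widehat{{\mathcal S}_G}$, which strengthens Lemma~3.11's bound $O(x) < \deg(x)$ by roughly a factor of two. The natural strategy is to combine the two divisibility facts already at our disposal. By Lemma~3.9 we know $|C_G(x)| \mid \deg(x)$, and clearly $\langle x \rangle \leq C_G(x)$, so $O(x) \mid |C_G(x)| \mid \deg(x)$; in particular $O(x) \mid \deg(x)$. Combined with the strict inequality $O(x) < \deg(x)$ from Lemma~3.11, a multiple of $O(x)$ that is strictly larger than $O(x)$ must be at least $2\,O(x)$, giving $\deg(x) \geq 2\,O(x)$ at once.

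First I would invoke Lemma~3.11 to record $\deg(x) > O(x)$, which is valid since $x$ is a vertex of $\widehat{{\mathcal S}_G}$ and $G$ is therefore non-solvable. Next I would observe that $\langle x \rangle$ is a subgroup of $C_G(x)$, so Lagrange gives $O(x) \mid |C_G(x)|$, and then Lemma~3.9 gives $|C_G(x)| \mid \deg(x)$; chaining these yields $O(x) \mid \deg(x)$. Finally, writing $\deg(x) = m\,O(x)$ with $m \in \N$, the inequality $\deg(x) > O(x)$ forces $m \geq 2$, hence $\deg(x) \geq 2\,O(x)$.

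There is essentially no obstacle here: the corollary is a two-line consequence of the preceding lemma and proposition, and the only thing to be slightly careful about is that Lemma~3.9 is stated for the full graph ${\mathcal S}_G$, but since the degree of a vertex in ${\mathcal S}_G$ equals its degree in $\widehat{{\mathcal S}_G}$ (noted just after Definition~3.1), this causes no trouble. One should also make sure $\deg(x) \neq 0$, which again follows from $x$ being a vertex of $\widehat{{\mathcal S}_G}$ together with connectedness (Theorem~3.7), so that the equation $\deg(x) = m\,O(x)$ genuinely has $m \geq 1$ before the strict inequality pushes it to $m \geq 2$.
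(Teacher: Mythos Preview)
Your proof is correct and follows exactly the paper's approach: combine the divisibility $O(x)\mid |C_G(x)|\mid \deg(x)$ with the strict inequality $O(x)<\deg(x)$ to force $\deg(x)\ge 2\,O(x)$. Note, however, that your cross-references are off by a few numbers relative to the paper: the divisibility result $|C_G(x)|\mid \deg(x)$ is Lemma~3.6 (not~3.9), the bound $O(x)<\deg(x)$ is Lemma~3.8 (not~3.11), and connectedness is Theorem~3.4 (not~3.7).
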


\begin{proof}
Combine Lemma 3.6 and Lemma 3.8.
\end{proof}

\begin{lem}
Let $G$ be a non-solvable group and $x$ be a vertex in $\widehat{{\mathcal S}_{G}}$. Then $deg(x)$ is not prime.
\end{lem}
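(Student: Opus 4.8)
The plan is to argue by contradiction: assume $deg(x) = p$ for some prime $p$ and squeeze $O(x)$ between two incompatible bounds. The two ingredients I would line up first are (i) the purely arithmetic lower bound on the degree coming from the centralizer, and (ii) the fact that a vertex of $\widehat{{\mathcal S}_{G}}$ has order greater than $1$, which was already observed when $\widehat{{\mathcal S}_{G}}$ was introduced (every vertex lies in $G \setminus Sol(G)$, so it is not $e_G$, and in fact its order exceeds $1$).

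Concretely, since $\langle x \rangle \leq C_G(x)$ we have $O(x) \mid |C_G(x)|$, and by Lemma 3.6 we have $|C_G(x)| \mid deg(x)$; chaining these gives $O(x) \mid deg(x)$. (Alternatively one can get $O(x) \mid deg(x)$ from the lemma $O(x) \mid |Sol_G(x)|$ together with $|Sol_G(x)| + deg(x) = |G|$ and $O(x) \mid |G|$.) Now if $deg(x) = p$ is prime, then $O(x) \mid p$; combined with $O(x) > 1$ this forces $O(x) = p$ (and incidentally $|C_G(x)| = p$ as well).

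To finish, I would invoke Lemma 3.8 (or equivalently Corollary 3.9), which states $O(x) < deg(x)$ for every vertex $x$ of $\widehat{{\mathcal S}_{G}}$. This yields $p = O(x) < deg(x) = p$, a contradiction, so $deg(x)$ cannot be prime. There is no real obstacle here: the argument is just a bookkeeping of divisibility relations already proved. The only points requiring a moment of care are justifying $O(x) \mid deg(x)$ (via $\langle x \rangle \leq C_G(x)$ and Lemma 3.6) and remembering that vertices of $\widehat{{\mathcal S}_{G}}$ have order strictly greater than $1$ — this is exactly what rules out the otherwise-possible degenerate situation $deg(x) = p$ with $O(x) = 1$.
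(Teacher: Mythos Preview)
Your proof is correct and follows essentially the same route as the paper: assume $deg(x)=p$ is prime, use $O(x)\mid |C_G(x)|\mid deg(x)$ (Lemma~3.6) together with $O(x)>1$ to force $O(x)=p$, and then contradict Lemma~3.8. The only cosmetic difference is that you spell out $\langle x\rangle\le C_G(x)$ and mention the alternative route through $O(x)\mid |Sol_G(x)|$, but the argument is the paper's.
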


\begin{proof}
Assume $deg(x) = p$ for a prime number $p$. By Lemma 3.6, \newline $O(x) \mid |C_G(x)| \mid deg(x)$, so $O(x) \in \{1, p\}$. $x$ is a vertex in $\widehat{{\mathcal S}_{G}}$ and therefore $x \notin Sol(G) \Rightarrow x \neq e_G \Rightarrow O(x) > 1 \Rightarrow O(x) = p = deg(x)$, a contradiction to Lemma 3.8. 
\end{proof}

\newpage
\noindent

\begin{lem}
Let $G$ be a non-solvable group. Then $\bigtriangleup(\widehat{{\mathcal S}_{G}}) < n-1$.
\end{lem}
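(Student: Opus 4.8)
The plan is to prove the slightly stronger fact that $\widehat{{\mathcal S}_{G}}$ has no universal vertex, i.e.\ that no vertex of $\widehat{{\mathcal S}_{G}}$ is adjacent to all the others; equivalently, that every vertex $x$ of $\widehat{{\mathcal S}_{G}}$ admits a non-neighbour in $\widehat{{\mathcal S}_{G}}$. Since a vertex of a graph on $n$ vertices has degree at most $n-1$, with equality exactly for a universal vertex, this gives $\bigtriangleup(\widehat{{\mathcal S}_{G}}) \leq n-2 < n-1$.

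So I would fix a vertex $x$ of $\widehat{{\mathcal S}_{G}}$ and hunt for a vertex $\neq x$ that is not adjacent to it. Since $x \in G \setminus Sol(G)$ we have $x \neq e_{G}$, hence $O(x) \geq 2$, and I would split into two cases. If $O(x) \geq 3$, then $x^{-1} \neq x$, and $x^{-1} \notin Sol(G)$ because $Sol(G)$ is a subgroup not containing $x$; thus $x^{-1}$ is a vertex of $\widehat{{\mathcal S}_{G}}$ distinct from $x$, and it is not a neighbour of $x$ since $\langle x, x^{-1}\rangle = \langle x\rangle$ is cyclic, hence solvable. (In the same way, any power of $x$ lying outside $Sol(G)$ is a non-neighbour of $x$, as $\langle x, x^{k}\rangle=\langle x\rangle$.)

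If $O(x) = 2$ the cyclic trick is vacuous, so I would invoke Lemma~2.19 instead. Because $Z(G)$ is an abelian, hence solvable, normal subgroup of $G$, we have $Z(G) \subseteq Sol(G)$, and therefore $x \notin Z(G)$; choose $g \in G$ with $y := gxg^{-1} \neq x$. Then $O(y)=2$, and $y \notin Sol(G)$ (otherwise $x=g^{-1}yg \in Sol(G)$, since $Sol(G) \vartriangleleft G$), so $y$ is a vertex of $\widehat{{\mathcal S}_{G}}$ distinct from $x$; by Lemma~2.19 the group $\langle x,y\rangle$ is (dihedral, hence) solvable, so $y$ is not a neighbour of $x$. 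In either case $x$ has a non-neighbour, so it is not universal, and the stated bound follows.

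The routine parts are the verifications that $x^{-1}$ and $y$ lie outside $Sol(G)$, which are immediate from $Sol(G) \vartriangleleft G$. The only step with any content is the involution case: there one cannot use the cyclic group $\langle x\rangle$, and the right substitute is the observation that $x$, being non-central because $Z(G)\subseteq Sol(G)$, has a distinct order-$2$ conjugate $y$, together with Lemma~2.19, which forces $\langle x,y\rangle$ to be solvable. I expect this to be the only place requiring a moment's thought.
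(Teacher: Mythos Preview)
Your argument is correct, and it takes a somewhat different route from the paper's. The paper argues by contradiction: assuming $\deg(x)=n-1$, it computes $|Sol_G(x)|=|Sol(G)|+1$, then invokes Lemma~2.8 (the divisibility $|Sol(G)|\mid|Sol_G(x)|$) to force $|Sol(G)|=1$ and $|Sol_G(x)|=2$, so that $O(x)=2$; only then does it produce a second involution $y$ (by the ``unique involution is central'' trick) and apply Lemma~2.19 for the contradiction. Your proof is more direct and more elementary: you bypass Lemma~2.8 entirely, handling the $O(x)\ge 3$ case immediately with the non-neighbour $x^{-1}$, and in the $O(x)=2$ case you obtain a distinct involution as a conjugate (using $Z(G)\subseteq Sol(G)$) and finish with Lemma~2.19. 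The two proofs coincide in spirit on the involution case; what your approach buys is avoiding the divisibility lemma, while the paper's approach buys a uniform reduction to a single configuration ($|Sol(G)|=1$, $O(x)=2$) before producing the contradiction.
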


\begin{proof}
There are $n = |G|-|Sol(G)|$ vertices in $\widehat{{\mathcal S}_{G}}$ and assume there exists a vertex $x$ such that $deg(x) = n-1$. Then we get 
\[
|G| - |Sol_G(x)| = |G| - |Sol(G)| - 1 \Rightarrow |Sol_G(x)| = |Sol(G)| + 1
\]
By Lemma 2.8, 
$|Sol(G)| \mid |Sol_G(x)|$ and therefore $|Sol(G)| \mid (|Sol(G)| + 1) \Rightarrow |Sol(G)| \mid 1 \Rightarrow |Sol(G)| = 1 \Rightarrow |Sol_G(x)| = 2 \Rightarrow Sol_G(x) = \{x, e_G\}$ with $O(x) = 2$. If $x$ is the only element in $G$ with $O(x) = 2$, then $x = gxg^{-1}$ $\forall g \in G \Rightarrow x \in Z(G) \subseteq Sol(G)$, a contradiction to $|Sol(G)| = 1$. Thus, there exists $y \in G$, $y \neq x$, with $O(y) = 2$. By Lemma 2.19, \newline $\langle x,y \rangle$ is solvable $\Rightarrow y \in Sol_G(x) \Rightarrow y = e_G$, a contradiction to $O(y) = 2$. Therefore every vertex $x$ in $\widehat{{\mathcal S}_{G}}$ satisfies $deg(x) < n-1 \Rightarrow \bigtriangleup(\widehat{{\mathcal S}_{G}}) < n-1$.
\end{proof}

\begin{lem}
Let $G$ be a non-solvable group. Then $\delta(\widehat{{\mathcal S}_{G}}) > 5$.
\end{lem}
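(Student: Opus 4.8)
The plan is to argue according to the order of a vertex $x$ of $\widehat{{\mathcal S}_{G}}$; since $x \notin Sol(G)$ we have $O(x) \geq 2$. If $O(x) \geq 3$, then Corollary 3.9 already gives $deg(x) \geq 2O(x) \geq 6 > 5$, and there is nothing more to do. The whole difficulty is therefore the case $O(x) = 2$: here Corollary 3.9 only yields $deg(x) \geq 4$, while Lemma 3.6 together with $\langle x \rangle \leq C_G(x)$ only tells us that $2 \mid |C_G(x)| \mid deg(x)$, i.e. that $deg(x)$ is even. Hence it suffices to rule out $deg(x) = 4$, after which $deg(x) \geq 6$ for every vertex and $\delta(\widehat{{\mathcal S}_{G}}) \geq 6 > 5$.

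So suppose $O(x) = 2$ and, for a contradiction, $deg(x) = 4$; let $Adj(x)$ denote the neighbourhood of $x$, so $|Adj(x)| = 4$. First I would record the closure properties of $Adj(x)$. If $y \in Adj(x)$ then $\langle x,y \rangle$ is not solvable, so $y \neq e_G$, and $O(y) \neq 2$ by Lemma 2.19; thus every element of $Adj(x)$ has order at least $3$, and in particular is not equal to its own inverse. Moreover $\langle x, y^{-1} \rangle = \langle x, y \rangle$, and since $x^2 = e_G$ one checks $\langle x, xy \rangle = \langle x, y \rangle = \langle x, yx \rangle$; hence $Adj(x)$ is closed under $y \mapsto y^{-1}$, under $y \mapsto xy$ and under $y \mapsto yx$. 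As inversion is then a fixed-point-free involution on the $4$-element set $Adj(x)$, we may write $Adj(x) = \{y, y^{-1}, z, z^{-1}\}$ with the four displayed elements pairwise distinct.

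The second step exploits this rigidity. Since $xy \in Adj(x)$ and $xy \neq y$ (because $x \neq e_G$), while $xy = y^{-1}$ would force $x = y^{-2} \in \langle y \rangle$ and hence $\langle x, y \rangle = \langle y \rangle$ solvable, we get $xy \in \{z, z^{-1}\}$; after relabelling $z$ by $z^{-1}$ if necessary, assume $xy = z$. Likewise $xy^{-1} \in Adj(x)$ cannot be $y^{-1}$, cannot be $y$ (else $x = y^2 \in \langle y \rangle$), and cannot be $z = xy$ (else $y = y^{-1}$, impossible as $O(y) \geq 3$); so $xy^{-1} = z^{-1}$. Taking inverses in $xy^{-1} = z^{-1}$ and using $x^{-1} = x$ gives $yx = z$, and comparing with $xy = z$ we obtain $xy = yx$. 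Then $\langle x, y \rangle$ is abelian, hence solvable, contradicting $y \in Adj(x)$. This excludes $deg(x) = 4$ and finishes the proof.

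I expect this last step to be the main obstacle. Corollary 3.9 and Lemma 3.6 reduce matters to the single value $deg(x) = 4$, but excluding it genuinely uses the group structure rather than divisibility, and the naive approach — simply trying to exhibit six neighbours of $x$, say $y$, $y^{-1}$, $xy$, $xy^{-1}$, $yx$, $y^{-1}x$ — fails because these candidates need not be pairwise distinct. The indirect argument above works precisely because assuming $|Adj(x)| = 4$ makes $Adj(x)$ so small that closure under $y \mapsto xy$ and $y \mapsto yx$ forces $x$ and $y$ to commute.
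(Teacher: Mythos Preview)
Your proof is correct and, in the case $O(x)=2$, takes a genuinely different route from the paper's. The paper picks a neighbour $y$ of $x$, checks by a short case analysis that the five elements $y,\,y^{-1},\,xy,\,yx,\,y^{-1}x$ are pairwise distinct neighbours of $x$ (so $\deg(x)\geq 5$), and then invokes Lemma~3.10 (the degree of a vertex is never prime) to exclude $\deg(x)=5$. You instead use Lemma~3.6 to see that $\deg(x)$ is even, reducing the problem to ruling out $\deg(x)=4$, and then argue structurally: closure of $Adj(x)$ under inversion and under left and right multiplication by $x$, together with $|Adj(x)|=4$, forces $xy=yx$. The paper's approach is more constructive in that it actually exhibits five neighbours; yours trades the pairwise-distinctness casework for a cleaner pigeonhole argument inside a four-element set, and uses only the parity consequence of Lemma~3.6 rather than the full Lemma~3.10. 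As a minor aside, your closing remark is a bit pessimistic: the six candidates $y,\,y^{-1},\,xy,\,xy^{-1},\,yx,\,y^{-1}x$ can in fact be shown to be pairwise distinct by exactly the kind of casework the paper carries out for its five, so a fully direct argument is also available.
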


\begin{proof}
Let $x$ be a vertex in $\widehat{{\mathcal S}_{G}}$. By Corollary 3.9, if $O(x) \geq 3$ then \newline $deg(x) \geq 6$. Let us consider the case $O(x) = 2$. $\widehat{{\mathcal S}_{G}}$ is connected, so there exists a vertex $y \neq x$ such that $\langle x,y \rangle$ is not solvable. If $O(y) = 2$ then by Lemma 2.19 $\langle x,y \rangle$ is solvable, a contradiction. Thus $O(y) > 2$ and the vertices $\{ y, y^{-1}, xy, yx, y^{-1}x \}$ are pairwise distinct: Indeed, if $y = y^{-1}$ or $yx = y^{-1}x$ then $O(y) = 2$, a contradiction. If one of $y, y^{-1}$ is equal to one of $xy, yx, y^{-1}x$, then $x \in \langle y \rangle \Rightarrow \langle x,y \rangle$ is solvable, a contradiction. If $xy = yx$ then $\langle x,y \rangle$ is abelian and therefore solvable, a contradiction. If $xy = y^{-1}x$ then $O(xy) = 2 \Rightarrow \langle x,y \rangle = \langle x,xy \rangle$ is solvable, a contradiction. \newline Every vertex $a \in \{ y, y^{-1}, xy, yx, y^{-1}x \}$ satisfies $\langle x,a \rangle = \langle x,y \rangle$ is not solvable $\Rightarrow deg(x) \geq 5$. By Lemma 3.10 $deg(x) \neq 5 \Rightarrow deg(x) > 5$.
\end{proof}

\begin{lem}
Let $G$ be a non-solvable group. Then $\widehat{{\mathcal S}_{G}}$ contains $K_{4,4}$ as a subgraph.
\end{lem}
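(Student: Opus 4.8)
The plan is to induct on $|G|$ (phrased as a minimal counterexample), first cutting down to a convenient generating pair and then splitting into an explicit ``generic'' case and a ``degenerate'' case that is passed to a smaller group. By Thompson's theorem there are $a,b\in G$ with $\langle a,b\rangle$ non-solvable; these are non-trivial and, by Lemma 2.19, not both involutions, so after possibly swapping them we obtain $x,y$ with $K:=\langle x,y\rangle$ non-solvable and $O(x)\ge 3$. Write $k=O(x)$; since $\langle x,x^{i}y\rangle=K$ for all $i$, every $x^{i}y$ lies in $Adj(x)$. Let $S=\{\,i\in\Z/k:(x^{i}y)^{2}=e\,\}$.

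Generic case: some $i$ has $i\notin S$ and $i+1\notin S$. Put $y'=x^{i}y$; then $\langle x,y'\rangle=K$ and, as $xy'=x^{i+1}y$, all of $O(x),O(y'),O(xy')$ are $\ge 3$. I claim
\[
A=\{\,x,\ x^{-1},\ xy',\ (xy')^{-1}\,\},\qquad B=\{\,y',\ y'^{-1},\ xy'x^{-1},\ xy'^{-1}x^{-1}\,\}
\]
are the parts of a $K_{4,4}$ in $\widehat{\mathcal S_G}$. Each of the sixteen subgroups $\langle a,b\rangle$ ($a\in A$, $b\in B$) contains $x$ and $y'$, hence contains $K$ and is non-solvable: for instance $\langle x,xy'x^{-1}\rangle\ni x^{-1}(xy'x^{-1})x=y'$, $\langle xy',y'\rangle\ni(xy')y'^{-1}=x$, $\langle xy',xy'x^{-1}\rangle\ni(xy')^{-1}(xy'x^{-1})=x^{-1}$, while $\langle x^{-1},\cdot\rangle=\langle x,\cdot\rangle$ and $\langle (xy')^{-1},\cdot\rangle=\langle xy',\cdot\rangle$ handle the rows coming from $x^{-1}$ and $(xy')^{-1}$. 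Consequently all eight elements are endpoints of edges, so they lie in $G\setminus Sol(G)$ and are vertices of $\widehat{\mathcal S_G}$. They are also pairwise distinct: the coincidences $x=x^{-1}$, $xy'=(xy')^{-1}$, $y'=y'^{-1}$, $xy'x^{-1}=xy'^{-1}x^{-1}$ are ruled out by the order conditions, and any other coincidence forces a relation ($y'\in\langle x\rangle$, $x$ inverting $y'$ under conjugation, $x$ and $y'$ commuting, and so on) that would make $K$ metacyclic or abelian, contradicting non-solvability.

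Degenerate case: for every $i$, $i\in S$ or $i+1\in S$, so $S\cup(S-1)=\Z/k$ and $|S|\ge k/2\ge 2$. For $i,j\in S$ the identities $(x^{i}y)^{2}=(x^{j}y)^{2}=e$ combine to give $yx^{\,i-j}y^{-1}=x^{-(i-j)}$; the exponents $e$ with $yx^{e}y^{-1}=x^{-e}$ form a subgroup $d\Z/k$ of $\Z/k$ containing all differences $i-j$, so $S$ lies in one class mod $d$ and $k/2\le|S|\le k/d$, whence $d\le 2$. If $d=1$ then $\langle x\rangle\vartriangleleft K$ with $K/\langle x\rangle$ cyclic, so $K$ is solvable — impossible; hence $d=2$, $k$ is even, and $N:=\langle x^{2}\rangle$ is a cyclic normal subgroup of $K$ with $|N|=k/2\ge 2$. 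Then $K/N$ is non-solvable (otherwise $K$ would be solvable) of order $\le|K|/2<|G|$, so by the induction hypothesis $\widehat{\mathcal S_{K/N}}$ contains a $K_{4,4}$; lifting its eight vertices to coset representatives in $K\le G$ and using that a homomorphic image of a solvable group is solvable, each of the sixteen subgroups $\langle u,v\rangle$ surjects onto a non-solvable group, hence is non-solvable, so these lifts form a $K_{4,4}$ in $\widehat{\mathcal S_G}$.

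Read as a minimal-counterexample argument this finishes the proof: a smallest non-solvable $G$ whose graph avoids $K_{4,4}$ cannot occur, since in the generic case we exhibit one directly and in the degenerate case minimality supplies one in $\widehat{\mathcal S_{K/N}}$ which we lift. The main obstacle is the degenerate case — extracting from $|S|\ge k/2$ the fact that $y$ inverts a small power of $x$, and checking that $K\to K/\langle x^{2}\rangle$ really is a reduction to a strictly smaller non-solvable group; the distinctness bookkeeping in the generic case is routine but needs some care.
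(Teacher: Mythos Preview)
Your argument is correct, and it follows a genuinely different route from the paper's. The paper invokes Burnside's $p^{a}q^{b}$ theorem: since $G/Sol(G)$ cannot have order $p^{a}q^{b}$, some Sylow subgroup for a prime $p\ge 5$ is not contained in $Sol(G)$, yielding an element $x\in G\setminus Sol(G)$ with $O(x)\ge 5$. Taking any neighbour $y$ of $x$ and the units $a_{1},\dots,a_{n}$ modulo $O(x)$ (there are at least four), the paper sets $U=\{x^{a_{1}},\dots,x^{a_{n}}\}$ and $V=\{x^{a_{1}}y,\dots,x^{a_{n}}y\}$ and checks directly that this is a $K_{n,n}\supseteq K_{4,4}$.

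Your proof, by contrast, avoids Burnside entirely: you start from an arbitrary non-solvable $K=\langle x,y\rangle$ with $O(x)\ge 3$ (available by Thompson together with Lemma~2.19), and then split into a generic case, where you can adjust $y$ so that $x$, $y'$ and $xy'$ all have order $\ge 3$ and write down an explicit $K_{4,4}$ by hand, and a degenerate case, where the many involutions among the $x^{i}y$ force $y$ to invert $x^{2}$, producing a nontrivial cyclic $N\trianglelefteq K$ and allowing an inductive descent to $K/N$. The paper's approach is shorter and requires no induction or case analysis, at the cost of importing Burnside; your approach is more elementary in its external input but pays for it with a two-case argument and a fair amount of coincidence-checking. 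Both ultimately hinge on locating an $x$ with enough ``room'' in $\langle x\rangle$ to manufacture four independent generators on each side; the paper guarantees this room via the prime $\ge 5$, while you manufacture it by the generic/degenerate dichotomy.
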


\begin{proof}
Let $|G| = \prod\limits_{i = 1}^{m} p_i^{k_i}$ where $p_1 < p_2 < ... < p_m$ are prime numbers and $k_1, k_2, ..., k_m \in \N $. If $m \leq 2$ then by Burnside's theorem $G$ is solvable, a contradiction. So $m \geq 3$. If $Sol(G)$ contains all $p_i-Sylow$ subgroups for  $i \geq 3$, then $|G/Sol(G)| = p_1^{l_1}p_2^{l_2}$ for appropriate $l_1, l_2 \in \N \cup \{0\} \Rightarrow G/Sol(G)$ is solvable $\Rightarrow G$ is solvable, a contradiction. Thus, there exists $x \in G \setminus Sol(G)$ within a $p_i-Sylow$ subgroup for some $i \geq 3$. $p_3 \geq 5 \Rightarrow O(x) = p^t$ for some prime number $p \geq 5$ and $t \in \N$. Let us denote
\[
A = \{a \in \N \mid a < O(x), \text{ } gcd(a, O(x)) = 1 \} = \{ a_1, a_2, ..., a_n \}
\]
\noindent
Since $O(x) \geq 5$, $|A| = n \geq 4$. The vertex $x \in G \setminus Sol(G)$ has at least one neighbour, say $y$. Denote
\[
U = \{ x^{a_1}, x^{a_2}, ..., x^{a_n} \} \text{ , } V = \{ x^{a_1}y, x^{a_2}y, ..., x^{a_n}y \}
\]
If $U \cap V \neq \emptyset$, then $\exists 1 \leq i,j \leq n : x^{a_i} = x^{a_j}y \Rightarrow y \in \langle x \rangle \Rightarrow \langle x,y \rangle$ is solvable, a contradiction. Thus $U \cap V = \emptyset$. Since $gcd(a_i, O(x)) = 1$ $\forall 1 \leq i \leq n$, for $x^{a_i} \in U$ and $x^{a_j} \in V$ we get $\langle x^{a_i}, x^{a_j}y \rangle = \langle x, x^{a_j}y \rangle = \langle x, y \rangle$, which is not solvable. $|U| = |V| = n$, otherwise $x^{a_i} =  x^{a_j}$ or $x^{a_i}y =  x^{a_j}y$ for some $i,j$, a contradiction to $O(x)$ definition. Thus, $|U|,|V| = n \geq 4$. Every $u \in U$ is connected by edge to every $v \in V \Rightarrow \widehat{{\mathcal S}_{G}}$ contains $K_{4,4}$ as a subgraph.
\end{proof}

\begin{cor}
$\widehat{{\mathcal S}_{G}}$ is not planar.
\end{cor}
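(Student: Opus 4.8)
The plan is to obtain this as an immediate consequence of the preceding lemma. Throughout, assume $G$ is non-solvable; if $G$ is solvable then $\widehat{{\mathcal S}_{G}}$ has no vertices at all, so it is worth recording at the outset that the corollary carries the standing hypothesis that $G$ is not solvable (under which the empty-graph degeneracy does not arise).

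First I would recall that, by the previous lemma, $\widehat{{\mathcal S}_{G}}$ contains a copy of $K_{4,4}$ as a subgraph. Deleting one vertex from each side of $K_{4,4}$ leaves a copy of $K_{3,3}$, so $\widehat{{\mathcal S}_{G}}$ contains $K_{3,3}$ as a subgraph as well.

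Next I would invoke two standard facts. First, $K_{3,3}$ is not planar: this is part of Kuratowski's theorem, but it also follows elementarily from Euler's formula, since a planar bipartite graph on $v$ vertices has at most $2v-4$ edges, whereas $K_{3,3}$ has $6$ vertices and $9 > 2\cdot 6 - 4 = 8$ edges. Second, planarity is hereditary under passing to subgraphs, because restricting a planar embedding of a graph to any subgraph yields a planar embedding of that subgraph; contrapositively, a graph that contains a non-planar subgraph is itself non-planar. Applying this with the subgraph $K_{3,3} \subseteq \widehat{{\mathcal S}_{G}}$ gives that $\widehat{{\mathcal S}_{G}}$ is not planar.

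There is essentially no obstacle here: all the substantive work was done in the preceding lemma, and what remains is only the routine passage from ``$K_{4,4}$ (hence $K_{3,3}$) occurs as a subgraph'' to non-planarity. The single point deserving a word of care is the degenerate solvable case, which is covered by the standing hypothesis mentioned above.
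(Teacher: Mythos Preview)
Your proof is correct and follows essentially the same approach as the paper: the paper simply says to combine the preceding lemma (existence of a $K_{4,4}$ subgraph) with Kuratowski's theorem, and you have spelled out the routine step of passing through $K_{3,3}$ and the hereditary nature of planarity. The paper also notes parenthetically that the minimum-degree bound $\delta(\widehat{\mathcal S}_G) > 5$ alone suffices (a planar graph must have a vertex of degree at most $5$), which gives a second route you might mention.
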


\begin{proof}
Combine Lemma 3.13 with Kuratowski's theorem (actually, Lemma 3.12 also implies that $\widehat{{\mathcal S}_{G}}$ is not planar. See~\cite{DBW}, 6.1.8). 
\end{proof}

\begin{prop}
Let $G$ be a group and ${\mathcal S}_{G}$ be its non-solvable graph. Then $|G| \mid \sum\limits_{x \in G}^{}deg(x)$.
\end{prop}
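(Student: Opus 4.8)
The plan is to deduce the statement directly from Proposition 2.16, which asserts that $|G|$ divides $\sum_{x\in G}|Sol_G(x)|$. The link between $deg(x)$ and $|Sol_G(x)|$ is the partition already used in the proof of Lemma 3.6: for every $x\in G$ one has $G = Sol_G(x)\uplus Adj(x)$, because for each $y\in G$ the subgroup $\langle x,y\rangle$ is either solvable or not, and ${\mathcal S}_G$ is a simple graph, so $x\notin Adj(x)$ while $x\in Sol_G(x)$ (indeed $\langle x\rangle$ is cyclic, hence solvable). Consequently $deg(x) = |Adj(x)| = |G| - |Sol_G(x)|$ for every $x\in G$.

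First I would sum this identity over all $x\in G$ to obtain
\[
\sum_{x\in G}deg(x)\;=\;\sum_{x\in G}\bigl(|G|-|Sol_G(x)|\bigr)\;=\;|G|^{2}-\sum_{x\in G}|Sol_G(x)|.
\]
Then I would observe that $|G|$ divides $|G|^{2}$ trivially, and that $|G|$ divides $\sum_{x\in G}|Sol_G(x)|$ by Proposition 2.16; since $|G|$ divides both summands on the right-hand side, it divides their difference, namely $\sum_{x\in G}deg(x)$. This finishes the proof.

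There is essentially no obstacle here: the entire weight of the argument has been pushed into Proposition 2.16 (which itself rests on Proposition 2.13 together with the fact, from Lemma 2.11, that $|Sol_G(x)|$ and $|C_G(x)|$ are constant on conjugacy classes). The only point that deserves an explicit sentence is the disjoint decomposition $G = Sol_G(x)\uplus Adj(x)$ — in particular that a vertex is always counted inside its own solvabilizer and never inside its own neighbourhood. I would also append a one-line remark that, because Proposition 2.16 is equally valid for nilpotentizers, the identical argument yields $|G|\mid\sum_{x\in G}deg(x)$ in the non-nilpotent graph ${\mathcal N}_G$ as well.
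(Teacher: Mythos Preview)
Your proof is correct and follows essentially the same route as the paper: the paper also uses the identity $|Sol_G(x)|+deg(x)=|G|$, sums over $x\in G$ to obtain $\sum_{x\in G}|Sol_G(x)|+\sum_{x\in G}deg(x)=|G|^{2}$, and then invokes Proposition~2.16 to conclude. Your extra sentence justifying $G=Sol_G(x)\uplus Adj(x)$ and your remark on the non-nilpotent graph are welcome additions but do not change the argument.
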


\begin{proof}
$|Sol_G(x)| + deg(x) = |G|$ $\forall x \in G \Rightarrow \sum\limits_{x \in G}^{}|Sol_G(x)|$ $+ \sum\limits_{x \in G}^{}deg(x) = |G|^{2}$. By Lemma 2.16, $|G|$ divides $\sum\limits_{x \in G}^{}|Sol_G(x)| \Rightarrow |G|$ divides $\sum\limits_{x \in G}^{}deg(x)$.
\end{proof}

\begin{prop}
Let $G$ be a group and ${\mathcal S}_{G}$ be its non-solvable graph. Then $|\{deg(x) \mid x \in G \}| \neq 2$.
\end{prop}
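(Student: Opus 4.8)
The plan is to distil a rigid ``two-value'' degree pattern out of the hypothesis, pass to $\overline{G}:=G/Sol(G)$ to make the solvable radical trivial, and then contradict the pattern with a central element of a Sylow subgroup via Lemma 3.6. To begin: if $G$ is solvable, Thompson's theorem makes ${\mathcal S}_{G}$ edgeless, so $|\{deg(x)\mid x\in G\}|=1$. Hence assume $G$ is non-solvable and, for contradiction, that $|\{deg(x)\mid x\in G\}|=2$. The isolated vertices of ${\mathcal S}_{G}$ are exactly the elements of $Sol(G)$, so (as $e_{G}\in Sol(G)$) the value $0$ occurs, and since no $x\notin Sol(G)$ is isolated, its degree must equal the unique other value $d\ge 1$. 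Thus $deg(x)=d$, equivalently $|Sol_{G}(x)|=|G|-d$, for every $x\notin Sol(G)$.

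Next I would work in $\overline{G}:=G/Sol(G)$, writing $\overline{x}:=x\,Sol(G)$. For $x\notin Sol(G)$, Lemma 2.12(3) gives $|Sol_{\overline{G}}(\overline{x})|=|Sol_{G}(x)|/|Sol(G)|=(|G|-d)/|Sol(G)|<|\overline{G}|$ (strict since $d>0$); since $\overline{z}\in Sol(\overline{G})$ if and only if $Sol_{\overline{G}}(\overline{z})=\overline{G}$, this forces $\overline{x}\notin Sol(\overline{G})$ and
\[
deg_{{\mathcal S}_{\overline{G}}}(\overline{x})=|\overline{G}|-|Sol_{\overline{G}}(\overline{x})|=\frac{d}{|Sol(G)|}=:d'>0 .
\]
As $\overline{e_{G}}$ is the only element of $\overline{G}$ that remains, $Sol(\overline{G})=\{\overline{e_{G}}\}$; in particular $\overline{G}$ is non-solvable (it is nontrivial because $G$ is), and every non-identity vertex of ${\mathcal S}_{\overline{G}}$ has the same degree $d'$.

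Finally, for each prime $q$ dividing $|\overline{G}|$ I would pick a Sylow $q$-subgroup $Q\le\overline{G}$; as $Q\ne 1$ its centre $Z(Q)$ is nontrivial, so choose $z\in Z(Q)$ with $z\ne\overline{e_{G}}$. Then $z\notin Sol(\overline{G})$, hence $deg_{{\mathcal S}_{\overline{G}}}(z)=d'$, while $Q\le C_{\overline{G}}(z)$ shows, via Lemma 3.6, that the full $q$-part $|Q|$ of $|\overline{G}|$ divides $|C_{\overline{G}}(z)|$, and therefore divides $d'$. Ranging over all primes $q\mid|\overline{G}|$, whose $q$-parts are pairwise coprime, yields $|\overline{G}|\mid d'$; but $d'=|\overline{G}|-|Sol_{\overline{G}}(z)|\le|\overline{G}|-2$ and $d'>0$, which is impossible. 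This contradiction proves $|\{deg(x)\mid x\in G\}|\ne 2$.

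The step I expect to be the crux is the reduction to $\overline{G}$. One cannot run the Sylow-centre argument inside $G$ directly, because $Sol(G)$ may contain an entire Sylow $q$-subgroup (or all of its centre), leaving no $x\in G\setminus Sol(G)$ whose centralizer carries the full $q$-part of $|G|$, so that no contradiction appears. Passing to $G/Sol(G)$ trivialises the solvable radical and thereby removes precisely this obstruction, while Lemma 2.12(3) guarantees that the two-degree pattern — the only feature of $G$ the argument actually uses — descends intact to the quotient.
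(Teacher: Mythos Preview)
Your argument is correct. The reduction to $\overline{G}=G/Sol(G)$ matches the paper's (the paper phrases it as an induction on $|G|$, but since $Sol(G/Sol(G))$ is always trivial this amounts to the same single passage to the quotient; note that the lemma you cite as ``Lemma 2.12(3)'' is Lemma~2.11(3) in the paper's numbering). The genuine difference is in the endgame once $Sol(\overline{G})=1$. The paper sums the degrees: with $deg(x)=d'$ for all $x\neq e$ one gets $\sum_{x}deg(x)=(|\overline{G}|-1)d'$, and Proposition~3.15 (itself a consequence of Proposition~2.13 via Proposition~2.16) gives $|\overline{G}|\mid(|\overline{G}|-1)d'$, whence $|\overline{G}|\mid d'$. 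You instead work prime by prime, using that the centre of a Sylow $q$-subgroup furnishes a non-identity element whose centralizer contains the full $q$-part of $|\overline{G}|$, and then invoke Lemma~3.6 directly to conclude that each $q$-part divides $d'$, hence $|\overline{G}|\mid d'$. Both routes rest on the same underlying fact $|C_{\overline{G}}(x)|\mid deg(x)$; the paper packages it globally through an orbit-counting sum, while your Sylow-centre argument applies it locally and thereby bypasses Propositions~2.16 and~3.15 altogether, at the cost of the extra (standard) observation that $Z(Q)\neq 1$ for a $q$-group $Q$.
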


\begin{proof}
We use induction on $|G|$. \newline If $|G|=1$ then $G = \{e_G\}$, so $|\{deg(x) \mid x \in G \}| = 1 \neq 2$. Let's assume that the proposition holds for every group $G$ with $|G| < m$, for a certain $m \in \N$. Let $G$ be a group with $|G| = m$. If $G$ is solvable then $deg(x) = 0$ $\forall x \in G \Rightarrow |\{deg(x) \mid x \in G \}| = 1 \neq 2$. Thus, we can assume $G$ is a non-solvable group. 
\newpage \noindent
If $|Sol(G)| > 1$ then $|G/Sol(G)| < m$ and therefore 
\[
|\{deg(xSol(G)) \mid xSol(G) \in G/Sol(G) \}| \neq 2
\]
in the non-solvable graph of $G/Sol(G)$. By Lemma 2.11, 
\[
|Sol_{G/Sol(G)}(xSol(G))| = \frac{|Sol_G(x)|}{|Sol(G)|} \Rightarrow 
\]
\[
|G/Sol(G)| - deg(xSol(G)) = \frac{(|G|-deg(x))}{|Sol(G)|} 
\]
The linear relation between $deg(xSol(G))$ and $deg(x)$ implies that 
\[
|\{deg(x) \mid x \in G \}| = |\{deg(xSol(G)) \mid xSol(G) \in G/Sol(G) \}| \Rightarrow
\]
\[
|\{deg(x) \mid x \in G \}| \neq 2
\]
We are left with the case $|Sol(G)| = 1$. Assume $|\{deg(x) \mid x \in G \}| = 2$. Since $deg(e_G) = 0$ there exists $d \in \N$ such that $\{deg(x) \mid x \in G \} = \{0, d\}$. Since $deg(x) = 0 \Leftrightarrow x \in Sol(G)$ we get that $deg(x) = d$ $\forall e_G \neq x \in G$. Therefore 
\[
\sum\limits_{x \in G}^{}deg(x) = deg(e_G) \text{ } + \sum\limits_{e_G \neq x \in G}^{}deg(x) = (|G|-1)d
\]
By Proposition 3.15,
\[
|G| \mid \sum\limits_{x \in G}^{}deg(x) \Rightarrow |G| \mid (|G|-1)d
\]
$|G| > 1$ and $|G|, |G|-1$ are coprime, so $|G|$ divides $d \Rightarrow d \geq |G|$, a contradiction. 
\end{proof}

\begin{cor}
Let $G$ be a non-solvable group. Then $\widehat{{\mathcal S}_{G}}$ is irregular.
\end{cor}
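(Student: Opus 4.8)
The plan is to deduce this at once from Proposition 3.16. Recall that a simple graph is \emph{regular} precisely when all of its vertices share a common degree, so it suffices to show that $\widehat{{\mathcal S}_{G}}$ is not regular whenever $G$ is non-solvable.

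First I would record how the degree sequences of ${\mathcal S}_{G}$ and $\widehat{{\mathcal S}_{G}}$ are related. As already observed after Definition 3.1, for a vertex $x \in G \setminus Sol(G)$ its degree in ${\mathcal S}_{G}$ equals its degree in $\widehat{{\mathcal S}_{G}}$, while every $x \in Sol(G)$ is an isolated vertex of ${\mathcal S}_{G}$, i.e.\ has degree $0$. Since $e_{G} \in Sol(G)$, the value $0$ always occurs. Hence
\[
\{deg(x) \mid x \in G\} = \{0\} \cup \{\, deg(x) \mid x \text{ a vertex of } \widehat{{\mathcal S}_{G}} \,\},
\]
where the degrees on the right are computed in $\widehat{{\mathcal S}_{G}}$.

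Now suppose, for a contradiction, that $\widehat{{\mathcal S}_{G}}$ is regular, say $d$-regular. Since $G$ is non-solvable, $Sol(G) \ne G$, so $G \setminus Sol(G) \ne \emptyset$ and $\widehat{{\mathcal S}_{G}}$ has at least one vertex; moreover, by Thompson's theorem ${\mathcal S}_{G}$ is not an empty graph, so it has at least one edge and therefore $d \ge 1$ (one could instead quote $d > 5$ from Lemma 3.12). Plugging $d$-regularity into the displayed identity gives $\{deg(x) \mid x \in G\} = \{0, d\}$, a set of size exactly $2$, contradicting Proposition 3.16. Therefore $\widehat{{\mathcal S}_{G}}$ is irregular.

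I do not anticipate any genuine obstacle here: the entire content lies in Proposition 3.16, and this corollary is merely the translation of ``regular'' into a statement about the number of distinct degrees, together with the bookkeeping observation that the elements of $Sol(G)$ contribute precisely the extra degree value $0$ to ${\mathcal S}_{G}$.
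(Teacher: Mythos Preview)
Your proof is correct and follows essentially the same approach as the paper: assume regularity, observe that the degree set of ${\mathcal S}_{G}$ becomes $\{0,d\}$, and invoke Proposition 3.16. You are in fact a bit more careful than the paper in explicitly justifying that $d \ne 0$ (so the set really has two elements), which the paper leaves implicit.
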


\begin{proof}
If $\widehat{{\mathcal S}_{G}}$ is regular, then there exists $d \in \N$ such that
\[
\{deg(x) \mid x \in G \setminus Sol(G) \} = \{d \} \Rightarrow \{deg(x) \mid x \in G \} = \{0, d\} \Rightarrow 
\]
\[
|\{deg(x) \mid x \in G \}| = 2
\]
and we get a contradiction to the last proposition.
\end{proof}

\begin{rem}
Proposition 3.16 and Corollary 3.17 can be converted naturally to $\widehat{{\mathcal N}_{G}}$ terms. Thus, we presented an alternative, simpler proof, of prof. Abdollahi's proof for the regularity of $\widehat{{\mathcal N}_{G}}$ (~\cite{NNGG} 7.1). 
\end{rem}

\pagebreak

\end{document}